\newtheorem{theorem}{Theorem}
\newtheorem{definition}[theorem]{Definition}
\newtheorem{lemma}[theorem]{Lemma}
\newtheorem{proposition}[theorem]{Proposition}
\newtheorem{remark}[theorem]{Remark}
\newcommand{\rn}[1]{\mathbb{R}^{#1}}
\newcommand{\re}{ \mathbb{R}}
\newcommand{\beq}{\begin{equation}}
\newcommand{\bea}[1]{\begin{array}{#1} }
\newcommand{\eeq}{ \end{equation}}
\newcommand{\ea}{ \end{array}}
\newcommand{\ep}{\epsilon}
\newcommand{\al}{\alpha}
\newcommand{\ga}{\gamma}
\newcommand{\de}{\delta}
\newcommand{\ds}{\displaystyle}
\newcommand{\ts}{\textstyle}
\newcommand{\rar}{\mbox{$\rightarrow$}}
\newcommand{\ran}{\rangle}
\newcommand{\lan}{\langle}
\newcommand{\Ga}{\Gamma}
\newcommand{\la}{\lambda}
\newcommand{\La}{\Lambda}
\newcommand{\ar}{\partial}
\newcommand{\si}{\sigma}
\newcommand{\Si}{\Sigma}
\newcommand{\om}{\omega}
\newcommand{\Om}{\Omega}
\newcommand{\be}{\beta}
\newcommand{\ph}{\phi}
\newcommand{\he}{\theta}
\newcommand{\Ph}{\Phi}
\newcommand{\hs}[1]{\mbox{$ \hspace{#1}$}}
\newcommand{\sem}{\setminus}
\newcommand{\ti}{\tilde}
\newcommand{\noi}{\noindent}
\renewcommand*{\backref}[1]{}
\renewcommand*{\backrefalt}[4]{%
 \ifcase #1 (Not cited.)%
   \or        (Cited on page~#2.)%
    \else      (Cited on pages~#2.)%
    \fi}  
\begin{document} 

\title[ On  $ d $ and $M$   Problems for Newtonian Potentials in Euclidean $ n $ space ] {On  $ d $ and $M$   Problems for Newtonian Potentials in Euclidean $n $ space} 

 \dedicatory{Dedicated to the memory of  Albert Baernstein II,  Ronald Gariepy, and  Walter Hayman}


\begin{abstract}
In this  paper  we first   make and discuss  a  conjecture concerning    Newtonian potentials in Euclidean n  space   which have all their mass on the  unit sphere about  the origin,  and are normalized to be one at the origin.  The conjecture essentially  divides these potentials  into subclasses   whose criteria for membership  is that a given member have  its maximum on the closed unit ball at most  M and its  minimum at least d.  It then  lists  the  extremal  potential in each subclass  which is  conjectured to   solve   certain  extremal  problems.    In  Theorem 1.1 we show existence of  these extremal  potentials.     In  Theorem 1.2   we prove  an   integral inequality  on spheres about the  origin,  involving  so called extremal potentials, which lends credence to the conjecture.     
    
\end{abstract}

\keywords{ harmonic functions,  Newtonian potentials,  Dirichlet problem,  boundary Harnack inequality,    Fatou theorem} 
\subjclass[2020]{31B05,31B10,31B20}

\author[J. Lewis]{John Lewis}
\address{{\bf John Lewis}\\ Department of Mathematics, University of Kemtucky\\ Lexington, Kentucky} 
\email{johnl@uky.edu}

\maketitle
\setcounter{tocdepth}{2}
\tableofcontents

\section{Introduction}  
\label{sec1}    
Let $ n \geq  2 $ be a   positive integer,   $  x = ( x_1,  x_2, \dots, x_n ) $  a point in  Euclidean  $n$  space, $ \rn{n},  $  and let $ |x| $ denote the norm  of  $ x. $    Put  $ B ( x, r ) =  \{ x : | x | < r \} $  when $ r > 0. $   For  fixed  $ n \geq  2,  $  let  $  \mu $  be  a  positive  Borel  measure on 
$  \mathbb{S}^{n-1} = \{ x : |x| = 1\} $   with  $  \mu (\mathbb{S}^{n-1})   = 1. $    
   Next   let   
$  \mathcal{H}^{n-1} $ denote Hausdorff  $ n - 1 $ measure   and let   $    \Ph $ be  a non decreasing  convex function on  $  \re .  $  Finally  let      $ \mathcal{F} $ denote the family of  potentials  $ p $    satisfying      
\beq \bea{l}  \label{1.0}  (a)    \hs{.2in}     p ( x )  =  {\ds  \int_{\mathbb{S}^{n-1}}}  | x - y |^{2-n}  d \mu ( y ),   x \in \rn{n},   \mbox{ when $ n > 2, $}     \\ \\  (b)  \hs{.2in}  p ( x )  =  2 {\ds  \int_{\mathbb{S}^{n-1}} } \log \frac{1}{| x -  y|}  \, \, d \mu ( y ) ,  x \in    \rn{2}.   \ea \eeq    
  
  Theorems 1.1 and 1.2 in this paper are based on my efforts to prove the following conjecture: \\ 
  \\ 
{ \bf Conjecture 1 :} {\em  If   $ n \geq 3, $ there is  a  1 - 1  map  from     \\ \[  \{ ( \xi_1, \xi_2 ) : 0 \leq \xi_1 < \xi_2 
\leq \pi  \}  \rar  \{ (d,M) :  2^{2-n} \leq d  <  1,  1 < M \leq \infty \}   \]   for which there exists 
a potential $ P ( \cdot, d, M ) \in  \mathcal{F}, $   satisfying      
\beq \label{1.1}   \hs{.1in}   \bea{l} (a)  \hs{.2in}  \mbox{   $  P  ( \cdot, d, M)   \equiv  M $ on 
 $ E_1 =  \{ x \in  \mathbb{S}^{n-1} :   \cos \xi_1   \leq x_1  \leq 1 \},  $ }    \\   (b) \hs{.2in}     \mbox{  $ P ( \cdot, d, M  ) \equiv  d $  on 
   $ E_2 =  \{ x  \in  \mathbb{S}^{n-1} :  -1 \leq x_1  \leq \cos \xi_2   \}, $  }  
 \\  (c)  \hs{.2in} \, \,  P ( \cdot, d, M ) \geq  d  \in \bar B(0,1), 
\mbox{ and } P(\cdot, d, M) \leq M \mbox{ in } \rn{n},  \\ (d)  \hs{.2in}  \,  P ( 0, d, M ) = 1    \mbox{ and  $  P ( \cdot, d, M ) $  is harmonic   in  }   
\rn{n}  \sem   ( E_1 \cup E_2 ).   \ea \eeq   
  Given     $
 d = d (\xi_1, \xi_2),  M = M (\xi_1, \xi_2),   $ 
let  \[   \mathcal{F}_d^M   
       = \{ p \in \mathcal{F}  \mbox{ with $ d \leq p$  in $ \bar B ( 0, 1) $ and $ p \leq M $ in $ \rn{n}$ \} . } \]      If      $ 0  < r  < \infty,  $ and   $  p \in \mathcal{F}_d^M ,  $ then  
\beq \label{1.2}    \int_{\mathbb{S}^{n-1}}  \Ph ( p (r y ) )  d  \mathcal{H}^{n-1} y   
\leq    \int_{\mathbb{S}^{n-1}}  \Ph ( P ( r y, d, M ) )  d  \mathcal{H}^{n-1} y  \, . \eeq      } 

The analog of Conjecture 1 is true in $ \rn{2}. $  To briefly outline its proof we use complex notation.  So $  i = \sqrt{-1}, z =  x + iy, \bar z  =  x - iy,  e^{i\he} = \cos \he + i \sin \he, $ and  \\
$ B ( z_0, \rho)  = \{ z : | z - z_0 | < \rho  \}. $   Let  $  U $   denote the class of   univalent  (i.e, 1-1 and analytic)    functions   $ f $  satisfying  $ f ( 0 ) = 0,  f' (0)  = 1,  $  and  for which  $ D  = f ( B (0, 1 ) ) $ is starlike with respect to  0 (so each line segment connecting 0 to a point in $ D $ is  also contained in $  D). $  If $ f \in U, $ then 
    using  the  fact that  $ f ( B (0, r ) ) $  is  also starlike,  with respect to 0,  one can show    Arg $ f (r e^{i\he}) $  (i.e the principal argument of  $f$)  on $ \ar B ( 0, r) $  is non decreasing as  a  function of  $ \he $  so    if $  z = r e^{i\he}  \in  B (0, 1), $   then      \beq    
       \label{1.3}  - i  \frac{d}{d\he}  \, \log f (r e^{i\he} )  =   \frac{ d}{ d \he}  \, [\mbox{Arg } f ( re^{i\he} ) - i 
      \log |f(re^{i\he}|\,  ] =   z   f' (z)/ f ( z)   \eeq  and thus  Re $ ( z  f' (z)/ f ( z))    \geq  0, $  when $ z \in  B (0, 1). $              
  
      From \eqref{1.3} and the  Poisson integral formula  for   $ B (0, 1) $  it follows (see [D]) that   
           \beq \label{1.4}  \frac{ z f' (z)}{ f (z)}  =   \int_{\mathbb{S}^1} \frac{ 1 + e^{-i \he} z }{ 1 - e^{- i \he} z }  \,  d \nu ( e^{i\he} ),  \, \, \,     z \in B (0, 1),  \eeq    where $  \nu  $ is  a  positive  Borel measure on  
           $ \mathbb{S}^1 $  with  $  \nu ( \mathbb{S}^1 ) = 1. $  Note that if  $ f $ is sufficiently smooth on $  \bar B  ( 0, 1 ), $  then 
 
  \beq  \label{1.5}  \frac{d}{d\he}  \mbox{Arg } f (e^{i\he} ) = 2\pi  d \nu ( e^{i\he} )/d \he. \eeq  \eqref{1.5}   implies that  if  $ I   \subset \ar B (0, 1 )  $  is  an arc  and  $ f (I ) $  lies on  a  ray through 0,  then   $  \nu (I) \equiv 0. $  
             Dividing  \eqref{1.3},  \eqref{1.4}  by   $ z $  and  integrating  we get  
             \beq \label{1.6}  \log (f(z)/z)  =   -  2  \int_{ \mathbb{ S}^1} \log ( 1 - e^{-i \he} z ) \, d \nu (e^{i\he} )  \eeq  where $ \log $ is the principal  logarithm.      From \eqref{1.6} we see that   
                       if   $  \, \nu  = \mu, $ then    
  $ p (z)  =  \log |f(z)/z|, z \in B (0, 1 ), $  in \eqref{1.0} $(b). $     Given $ \ti d, 1/4 \leq \ti d < 1, $   and $ 1 <  \ti M \leq \infty, $    let  $ U^{\ti M}_{\ti d} $  denote starlike univalent functions   $f$ in  $ U $  satisfying   $ \ti  M  \geq | f (z)/z| \geq   \ti d $ in  $ B (0, 1 ). $           
Thus if  $  d = \log \ti d $ and  $ M = \log \ti M, $ then  for $ n = 2, $   \[  
  \mathcal{F}^M_d = \{ p(z) = \log |f(z)/z|  : f \in U^{\ti M}_{\ti d} \}  \]
  Using this  fact  one sees that the analogue of  $ P  ( \cdot, d, M) $  in   Conjecture 1 for $ n = 2$ is    
            $ P ( z, d, M )  =    \log | G ( z, \ti d, \ti M )/z |, z \in B( 0, 1 ), $  where  $ G ( \cdot, \ti d, \ti M )  $ maps   $ B (0, 1  ) $  onto  
             $   D = D (\ti d, \ti M ).$    If  $  1/4  <   \ti d  <  1 <  \ti M  < \infty, $  then $ D $ is the bounded keyhole domain described as follows. For some  $ \tau =  \tau ( \ti d, \ti M ),   0   <  \tau   < \pi,   \ar D $ is the union of the arcs :  
            \[     \{ \ti M e^{ i \he} : - \tau  \leq \he \leq \tau  \},  \{ \ti d e^{i \he} :  \tau  \leq \he \leq  2 \pi - \tau \}, \]   and the  line segments,  $  [\ti de^{i\tau}, \ti Me^{i\tau}],   [\ti de^{-i\tau}, \ti Me^{-i\tau}] . $             
   
               In    \cite{BL1}  
 we showed   for fixed $ \ti d, \ti M,  $  that  $ L (z, \ti d. \ti M) =   
\log (G (z, \ti d, \ti M  )/z ) $  maps  $ B ( 0, 1 ) $ univalently  onto a  convex domain containing  0  and  if $ f \in   U_{\ti d}^{\ti M } , $  then $ l ( z ) = \log (f (z)/z) $ is subordinate   to     $ L ( z, \ti d, \ti M ) $.  That is,   $ L^{-1} \circ l $ maps $   B ( 0, 1) $  into  $  B (0, 1).$  This result  was  in fact  a  corollary of  a  much more general  subordination theorem for    Mocanu  convex  univalent functions that are bounded above and below in the unit disk.   Our proof  used  a contradiction type argument and  the Hadamard - Julia variational formulas to determine the solutions to a  certain class of  extremal problems. Runge's theorem then gave subordination in the given class of Mocanu convex functions.  
 Conjecture 1 for $ n = 2, $  follows from   the  relationship between  $ p, P, $   and  $ f,  G,  $ as  well as properties of subordination (see  for example \cite{BL2}).\\
      
  Let   $ e_i $  denote the  point in $ \rn{n}$  with 1  in the $i$ th position and   zeroes elsewhere.   Conjecture 1 is true  in   $ \rn{n}, n >2,  $   when $ \xi_2 = \pi,  0 \leq \xi_1 < \pi , $  
so  $ E_2  = \{  - e_1 \},  $ and  $ 1 < M \leq \infty $.    It was   proved in 
   \cite{GL} .   Our   proof  used  a maximum principle  for  the  celebrated Baernstein  *  function  (see          
   \cite{H2} or   \cite{BDL}),   defined as  follows:  
         Given  $  x  \in \rn{n} \sem \{0\}, $ introduce spherical coordinates, $ r, \he $   by   $ r  = |x|,  x_1 = r \cos \he,  $   $  0  \leq \he  \leq \pi. $  Let  $ l$ be  a locally  integrable   real valued function   on 
         $ A  =  \{ x \in \rn{n}  : r_1 < |x|   < r_2  \}, $  and  set           
         \beq  \label{1.7}  l^{*}  ( r, \he)  =  \sup_{\La } \int_{\La}  l (ry ) d \mathcal{H}^{n-1} y  \eeq  
  where the  supremum is taken over all Borel  measurable sets  $  \La  \subset \mathbb{S}^{n-1} $     with \[ \mathcal{H}^{n-1} ( \La ) =  \mathcal{H}^{n-1} ( \{y \in \mathbb{S}^{n-1} : y_1 \geq \cos \he \}) .\] 
One can show that  \eqref{1.2}  in Conjecture 1 is   equivalent   to    
   \beq  \label{1.8}  p^*  ( r, \he )  \leq   P^* (r, \he, d, M ) ,   \mbox{ whenever } 0 < r < \infty, \, 0  \leq  \he \leq \pi.   \eeq 
	  For  a proof of this  equivalence  see  section 9.2 in  \cite{H2}.    \\

For  $ n = 2, $    Professor   Baernstein  in    \cite{B} showed  that if  $ l $ is subharmonic   in $  A,  $  then $ l^* $  in   \eqref{1.7} is subharmonic in   
           $  I = \{  z = r e^{i\he}   :   0 <  \he < \pi,  r_1  < r  < r_2 \}. $     
 Moreover  if  \[ \bea{l}
  (a) \hs{.2in}   l ( r e^{i \he} )  =  l( r e^{- i\he}), \,  0 \leq \he \leq \pi,  \\ 
 (b) \hs{.2in}   l ( r e^{i\he} )  \mbox{   is  non increasing on $  [0, \pi],$    
  for fixed $ r,  r_1 < r < r_2, $  } \\  (c) \hs{.2in} \mbox{$ l$ is harmonic in  $A$, } \ea \]   then  $ l^* $ is harmonic  in  $ I . $   
 In $ \rn{n}, n > 2, $  $ l^* ( r, \he )  $   need not  be subharmonic  in   $ I$  even   when $ l $  is harmonic in 
 $ A.  $    Instead we  proved (see also \cite{BT}) the  following maximum principle: \,  Suppose  $ l $ is subharmonic in  $ A $  and   $ L $ is harmonic in  \[   \Om =  \bigcup_{r_1 < r  < r_2}    \{ x :  |x| = r,  x_1 >  \cos ( \he (r) )\},      0 < \he (r) \leq  \pi,  r  \in  (r_1, r_2),  \]   with $ L = L ( r,  \he ) $  symmetric about the $ x_1 $ axis and $ L(r, \cdot )$ non increasing on \\  $ [0, \he (r) ) ,  r_1 < r < r_2. $  
Then  \[    (  l^* -  L^* ) (x)    <  \sup_{\Om}    ( l^* -  L^*) , \mbox{ whenever }  x \in \Om, \mbox{ unless } L \equiv l.  \]  

 To briefly  outline  the   proof of   Conjecture 1 in \cite{GL},  for  given   $ \xi_2 = \pi, $\\ $  0 < \xi_1 < \pi,  $  existence of  $P= P ( \cdot, d, M ) ,  $  satisfying \eqref{1.1},   can be deduced from  a working knowledge of such tools for harmonic functions as  (a)  Wiener's criteria for solutions to the Dirichlet problem, 
 (b) the maximum principle for harmonic functions,   (c)  the Riesz representation formula for superharmonic functions, and  (d) invariance of the Laplacian under  reflection about planes containing  zero (see the proof of  \\ Theorem 1.1 for more  elaborate details).   
  Let  $  \Om =  \{ x :  - x \in  \rn{n} \sem ( E_1  \cup   ( 0,   \infty] ) \},$     $    L (x) =-  P ( - x, d, M ), $   \mbox{ and } $ l (x) 
= - p ( - x ), $  for $ x \in \rn{n}. $      
If   $ {\ds   \sup_{x \in \Om}  ( l^*   -  L^*) (x)  > 0,} $    then from  subharmonicity of  $ - p,$  harmonicity of $ P $  in $ \Om, $   decay of both potentials at $ \infty, $  and  the above maximum  principle, it  follows that  
there exists   $ y  \in   \ar  \Om  \cap  \mathbb{S}^{n-1} $  with  spherical coordinates  $ |y| =  1,  \hat \he,$   $ \pi  - \xi_1  \leq  \hat \he  \leq \pi, $  and  $    l^* ( 1, \hat \he ) - L^* ( 1,\hat  \he )   >  0. $ 
Since   $  l \geq - M $  and  $ L = - M $  on  $  - E_1,   $   we then obtain    
\[   0 <    ( l^*   -  L^*)  ( 1, \hat \he)  \leq   l^* (1, \pi ) -  L^* (1, \pi )  = 0.  \]  From this contradiction we conclude  that  $ l^* \leq L^* $ in $ \rn{n}. $ 
Next using  $ l^* ( r, \pi )  =  L^*(r, \pi ), 0 \leq r < \infty,  $  and  whenever $q \in \{ l, L\},$  that \[ (-q)^* (r,   \he )  =   q^* ( r,   \pi -  \he )  -    q^* ( r, \pi ) , 0 < r < \infty,  0  < \he  \leq  \pi ,  \]   we get  $  p^*  \leq P^*  $  in  $ \rn{n}, $ which  as mentioned in   \eqref{1.8}  implies   \eqref{1.2}.   

We have not been able to prove  \eqref{1.2} in Conjecture 1  for any other values of $ \xi_1,  \xi_2. $  However in  \cite{L}   we  used a mass  moving  method in  \cite{S} to show that  if   $ \xi_1 = 0, 0 < \xi_2 < \pi, $  and $ p \in \mathcal{F}_d^{\infty},  $    then  
    for  $ 0 < r  \leq 1, $   \beq  \label{1.9}   \max_{x \in B(0,r)} p ( x )   \leq  P(r, 0, d, \infty )   \mbox{ and }   \min_{x \in B(0,r)} p ( x )   \geq P( r, \pi, d, \infty) . \eeq  \\
  Moreover in this paper we prove the first part of our conjecture:    
 \begin{theorem}    \label{thm 1}   If  $ n \geq 3, $ there is a  1-1 map  from 
\[  \{ (\xi_1, \xi_2 ) :  0 \leq \xi_1 <  \xi_2 \leq \pi \}  \rar  \{ (d, M ) : 2^{2-n} \leq d  < 1 < M \leq \infty \}, \] for which there exists  a  potential   $ P = P ( \cdot, d, M ) 
\in \mathcal{F}  $  satisfying  
\eqref{1.1}. \end{theorem}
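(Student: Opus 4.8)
\textbf{Strategy.} The plan is to fix a pair $(\xi_1,\xi_2)$ with $0\le\xi_1<\xi_2\le\pi$, to define the candidate extremal potential $P(\cdot,d,M)$ as a solution of an obstacle-type problem on $\rn n$, and then to read off the corresponding values $d=d(\xi_1,\xi_2)$ and $M=M(\xi_1,\xi_2)$ from this solution. Concretely, I would first treat the case $1<M<\infty$ and $\xi_1>0$. Consider the (closed) sets $E_1=E_1(\xi_1)$ and $E_2=E_2(\xi_2)$ on $\mathbb S^{n-1}$ as in \eqref{1.1}. The function $P$ should be the smallest superharmonic function on $\rn n$ that is $\ge d$ on $\bar B(0,1)$, equals $M$ on $E_1$, equals $d$ on $E_2$, is harmonic off $E_1\cup E_2$, decays at $\infty$, and is normalized by $P(0)=1$. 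Rather than prescribing $M$ and $d$ in advance, I would build $P$ first and let $M,d$ emerge as the boundary constants; the normalization $P(0,d,M)=1$ then pins down the overall scale.

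\textbf{Construction.} The cleanest route is a Perron/balayage construction together with a scaling normalization. Fix $M>1$. For $t>0$ let $\mu_t$ be the balayage onto $E_1$ of the measure $M\,\delta_{e_1}$... more precisely: let $v$ be the Newtonian potential with all mass on $E_1\cup E_2$ obtained by solving the Dirichlet problem in $\rn n\setminus(E_1\cup E_2)$ with boundary data $M$ on $E_1$ and a free constant $d$ on $E_2$, subject to $v\ge d$ on $\bar B(0,1)$, $v\le M$ everywhere, $v(x)\to 0$ as $|x|\to\infty$, and the constraint that the total mass equal the value forced by $v(0)$. The Riesz representation for superharmonic functions shows $v$ is a Newtonian potential of a positive measure supported on $E_1\cup E_2$; since $E_1\cup E_2\subset\mathbb S^{n-1}$, the total mass is $v$-at-$\infty$-normalizable, and dividing by the mass puts $v$ in the class $\mathcal F$. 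The two free parameters $d$ and the mass are then tied together by the two scalar conditions ``$v(0)=$ mass$\,\times\,1$'' and ``$d=\min_{\bar B(0,1)}v$''; solving this $2\times2$ system (monotonically in $M$, $\xi_1$, $\xi_2$) produces $d(\xi_1,\xi_2)$ and shows $d\in[2^{2-n},1)$. Here the lower bound $2^{2-n}$ should come from comparison with the extreme configuration $E_1=\{e_1\}$, $E_2=\{-e_1\}$ (a single point mass at $-e_1$ gives value $2^{2-n}$ at every point of $\mathbb S^{n-1}$ not equal to $-e_1$), which is the boundary case treated in \cite{GL}. The tools needed are exactly those listed in the introduction: Wiener's criterion guarantees the boundary values are attained on $E_1\cup E_2$ (both sets are closed caps, hence regular), the maximum principle gives $d\le P\le M$, the Riesz formula gives the potential representation, and reflection symmetry in hyperplanes through $0$ containing the $x_1$-axis gives that $P$ is axially symmetric about the $x_1$-axis.

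\textbf{Limiting and degenerate cases.} For $M=\infty$ the obstacle ``$P\le M$'' is vacuous; here $P$ is the solution with $P=+\infty$ formally on $E_1$, which I would instead realize as a monotone increasing limit $P(\cdot,d,M)\uparrow P(\cdot,d,\infty)$ as $M\to\infty$, checking that the limit is still a finite potential in $\mathcal F$ away from $E_1$ provided $\mathcal H^{n-1}(E_1)>0$, i.e.\ $\xi_1<\pi$. For $\xi_1=0$ the set $E_1$ degenerates to the single point $\{e_1\}$; then the ``$P\equiv M$ on $E_1$'' condition is just $P(e_1)=M$, and one recovers the configuration of \eqref{1.9} from \cite{L}. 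The case $\xi_2=\pi$, $E_2=\{-e_1\}$, is precisely the one established in \cite{GL}, so $P$ there already exists with the stated properties; this also serves as the base comparison object bounding all the others. Finally, injectivity of the map $(\xi_1,\xi_2)\mapsto(d,M)$ should follow from strict monotonicity: enlarging $E_1$ (decreasing $\xi_1$) strictly increases $M$ for fixed mass normalization, and enlarging $E_2$ (increasing $\xi_2$) strictly decreases $d$, by the strong maximum principle applied to the difference of two such potentials.

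\textbf{Main obstacle.} I expect the crux to be not the existence of \emph{a} harmonic function with the right boundary behavior---that is standard potential theory---but rather simultaneously arranging all of \eqref{1.1}(a)--(d): the hard coupling is between the \emph{free} constant $d$ on $E_2$ and the one-sided obstacle $P\ge d$ on all of $\bar B(0,1)$, since a priori the minimum of the harmonic extension over $\bar B(0,1)$ need not be attained on $E_2$ and need not equal the prescribed constant there. Making this consistent is where the choice of $d=d(\xi_1,\xi_2)$ is dictated: one must show there is a unique $d$ for which the harmonic function taking value $M$ on $E_1$ and the constant $d$ on $E_2$ and harmonic elsewhere automatically satisfies $\min_{\bar B(0,1)}P=d$ with the minimum set meeting $E_2$. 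This is a fixed-point/intermediate-value argument in $d$, and its rigor (continuity and strict monotonicity of $d\mapsto\min_{\bar B(0,1)}P_d - d$) together with verifying that $E_2$ lies in the contact set is the technical heart of the proof of Theorem~\ref{thm 1}.
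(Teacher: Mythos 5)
Your high-level picture (combine the harmonic measures of $E_1$ and of $E_2$, normalize at the origin, and let $d,M$ fall out) does coincide with the paper's: the paper sets $V=a(\gamma\omega_2+\omega_1)$ with $\gamma=\inf_{(\xi_1,\xi_2)}\omega_1/(1-\omega_2)$, which is essentially the ``$2\times2$ system'' you describe, and you also correctly flag that the choice of $d$ must be tuned so the minimum over $\bar B(0,1)$ is attained on $E_2$ (the paper achieves this via Lemma~\ref{lem2.3} and Lemma~\ref{lem2.4}, not an abstract IVT argument, but that is cosmetic).

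The genuine gap is the sentence ``The Riesz representation for superharmonic functions shows $v$ is a Newtonian potential of a \emph{positive} measure supported on $E_1\cup E_2$.'' This is false as stated, and it is precisely the issue that consumes most of the paper's proof. Across $E_1$ the function $V$ really is superharmonic (because $V\le M$ in all of $\rn n$ with equality on $E_1$), so $\sigma|_{E_1}\ge 0$ is cheap. But $V\ge d$ holds only on $\bar B(0,1)$, not outside the ball; near a point of $E_2$ the function $V$ drops below $d$ on the exterior side (it decays to $0$ at infinity), so $V$ is \emph{not} superharmonic across $E_2$, and the Riesz representation a priori gives a \emph{signed} measure there, cf.\ \eqref{2.3}. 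Showing $\sigma|_{E_2}\ge 0$ amounts to a one-sided bound on the normal-derivative jump, \eqref{3.8}, and that is where the Kelvin transform to a slit-type configuration, the De Silva--Savin boundary Harnack expansion (Theorem~A), and the identification $\hat\gamma=d/M$ from Lemma~\ref{lem2.4} all enter; the crucial cancellation in \eqref{3.16}--\eqref{3.17} uses the square-root expansion coefficients $a_{1,0}(0)=(d/M)a_{3,0}(0)$ near the edge $\{x_1=\cos\xi_2\}$. Your proposal offers no substitute for this step, and the fixed-point reformulation does not remove it: even for the ``correct'' $d$ you still have to prove the resulting Riesz mass on $E_2$ has a sign, which is a regularity/asymptotics statement at the thin edge rather than a consequence of the variational setup. (Your injectivity argument via the strong maximum principle, on the other hand, matches the paper's.)
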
 
Also we prove,   
 \begin{theorem}    \label{thm 2}   Given $  P ( \cdot,  d,  M )    $   as in  Theorem 1.1.    If   $ 0  < \xi_1  < \pi,  \xi_2 = \pi,  $  and $ p \in  \mathcal{F}_d^M , $   then     
\beq \label{1.10}   \int_{\mathbb{S}^{n-1}}  \Ph ( p (r y ) )  d  \mathcal{H}^{n-1} y      
 \leq  \int_{\mathbb{S}^{n-1}}  \Ph ( P ( r y, d, \infty  ) )  d  \mathcal{H}^{n-1} y.  \eeq     \end{theorem}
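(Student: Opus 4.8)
The plan is to obtain \eqref{1.10} by composing the sharp inequality of \cite{GL} — available here because $\xi_2=\pi$ — with a comparison between the two extremal potentials $P(\cdot,d,M)$ and $P(\cdot,d,\infty)$. As a first step I would replace the family of integral inequalities \eqref{1.10} (as $\Phi$ ranges over all non-decreasing convex functions) by the single statement
\[
p^{*}(r,\theta)\ \le\ P^{*}(r,\theta,d,\infty),\qquad 0<r<\infty,\quad 0\le\theta\le\pi,
\]
using the equivalence of \eqref{1.2} and \eqref{1.8} recalled above (cf.\ \cite{H2}). The only structural input of that equivalence is that every member of $\mathcal{F}$ has one and the same spherical mean on each sphere centred at the origin, namely $\int_{\mathbb{S}^{n-1}}p(ry)\,d\mathcal{H}^{n-1}y=\omega_{n-1}$ for $0<r\le 1$ and $=\omega_{n-1}r^{2-n}$ for $r\ge 1$ — this follows from $p(0)=1$, the mean value property of harmonic functions in $B(0,1)$, and Newton's theorem in $\rn{n}\setminus\bar B(0,1)$ — and since $P(\cdot,d,\infty)\in\mathcal{F}$ the equivalence applies with $P(\cdot,d,\infty)$ in place of $P(\cdot,d,M)$.

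Since $\xi_2=\pi$, Conjecture~1 is true in this configuration by \cite{GL}; in particular $p^{*}(r,\theta)\le P^{*}(r,\theta,d,M)$ for all $r,\theta$ and all $p\in\mathcal{F}_d^{M}$. Thus it suffices to establish the comparison of extremals
\[
P^{*}(r,\theta,d,M)\ \le\ P^{*}(r,\theta,d,\infty),\qquad 0<r<\infty,\quad 0\le\theta\le\pi,\qquad(\dagger)
\]
where $P(\cdot,d,\infty)$ is the extremal attached to the parameter pair $(0,\xi_2')$ that the map of Theorem~1.1 sends to $(d,\infty)$; write $E_2'=\{x\in\mathbb{S}^{n-1}:-1\le x_1\le\cos\xi_2'\}$ for the cap on which $P(\cdot,d,\infty)\equiv d$, its only other support set being $\{e_1\}$.

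To prove $(\dagger)$ I would first record that $P(\cdot,d,M)$ and $P(\cdot,d,\infty)$ are symmetric about the $x_1$-axis and that, for each fixed $r$, the profiles $\theta\mapsto P(r,\theta,d,M)$ and $\theta\mapsto P(r,\theta,d,\infty)$ are non-increasing on $[0,\pi]$ (this follows from the construction in Theorem~1.1: the angular boundary values on $\mathbb{S}^{n-1}$ are non-increasing in $\theta$, and the Poisson kernels for $B(0,1)$ and for its exterior preserve this). Hence on each sphere $\{|x|=r\}$ these profiles coincide with their own decreasing rearrangements after $\theta$ is traded for the cap measure $\mathcal{H}^{n-1}(\{y_1\ge\cos\theta\})$; as they also have equal mean, $(\dagger)$ is equivalent to the majorization, on every such sphere, of $\theta\mapsto P(r,\theta,d,M)$ by $\theta\mapsto P(r,\theta,d,\infty)$, equivalently to
\[
\int_{\mathbb{S}^{n-1}}\bigl(P(ry,d,M)-c\bigr)_{+}\,d\mathcal{H}^{n-1}y\ \le\ \int_{\mathbb{S}^{n-1}}\bigl(P(ry,d,\infty)-c\bigr)_{+}\,d\mathcal{H}^{n-1}y
\]
for every $c$ and every $r$. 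This majorization would in turn follow from a single-sign-change property of $w:=P(\cdot,d,\infty)-P(\cdot,d,M)$, namely that on each sphere about $0$ one has $w\ge 0$ on a polar cap about $e_1$ and $w\le 0$ on the complementary cap about $-e_1$. Now $w$ is the Newtonian potential of a signed Borel measure on $\mathbb{S}^{n-1}$ of total mass zero, so $w(0)=0$ and $w(x)\to 0$ (faster than $|x|^{2-n}$) as $|x|\to\infty$; moreover $w$ is subharmonic in $\rn{n}\setminus(\{e_1\}\cup E_2')$, superharmonic across $E_2'$, equal to $+\infty$ at $e_1$, and equal to $d-P(\cdot,d,M)\le 0$ on $E_2'$. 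The single-sign-change property should then be extracted from a maximum-principle analysis of the nodal set of $w$: a connected component of $\{w<0\}$ not meeting $E_2'$ is forced to be empty by the minimum principle for the superharmonic part and the decay at infinity, so $\{w<0\}$ meets each sphere in one cap about $-e_1$; dually, the subharmonicity of $w$ off $\{e_1\}\cup E_2'$, the values of $w$ at $0$, at $\infty$ and at $e_1$, and the axial symmetry confine $\{w>0\}$ to one cap about $e_1$ on each sphere. An alternative route to $(\dagger)$ is to deform the mass distribution of $P(\cdot,d,M)$ — which for $\xi_2=\pi$ is the equilibrium measure of the cap $E_1$ — into that of $P(\cdot,d,\infty)$ by the mass-moving scheme of \cite{S} used in \cite{L}, verifying that no elementary displacement decreases $\int_{\mathbb{S}^{n-1}}\Phi(\,\cdot\,(ry))\,d\mathcal{H}^{n-1}y$.

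Chaining the three displayed inequalities, $p^{*}\le P^{*}(\cdot,d,M)\le P^{*}(\cdot,d,\infty)$, and reading the equivalence of \eqref{1.2} and \eqref{1.8} backwards yields \eqref{1.10}. The hard part is $(\dagger)$: it is the one place where the two explicit extremals have to be compared, and since neither dominates the other pointwise (indeed $w\le 0$ on $E_2'$ while $w=+\infty$ at $e_1$) there is no shortcut by monotonicity — one must control the nodal structure of $w$, or, in the alternative approach, keep careful track of the mass transport.
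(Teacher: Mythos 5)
Your reduction is correct and is in fact exactly how the paper begins its proof: since $\xi_2=\pi$, \eqref{1.2} holds for $P(\cdot,d,M)$ by \cite{GL}, and the theorem reduces, via the $*$-function equivalence and the equal spherical means, to the single-extremal comparison
\[
\int_0^\tau P(r,\theta,d,M)\sin^{n-2}\theta\,d\theta \ \le\ \int_0^\tau P(r,\theta,d,\infty)\sin^{n-2}\theta\,d\theta
\]
for all $r,\tau$, which is your $(\dagger)$. The paper also uses Baernstein's maximum principle to reduce this further to $r=1$, as you do at the start. So up to and including the statement of $(\dagger)$ you are on the same track as the paper.

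The genuine gap is the proof of $(\dagger)$, which you correctly flag as the hard part but do not actually establish. Your nodal-set sketch does not go through as stated. Writing $w=P(\cdot,d,\infty)-P(\cdot,d,M)$, $w$ is subharmonic precisely on $\rn{n}\setminus E_2'$ (it is harmonic off $E_1\cup E_2'\cup\{e_1\}$, subharmonic across $E_1\setminus\{e_1\}$ because $P(\cdot,d,M)$ carries mass there, and superharmonic only across $E_2'$). A component of $\{w<0\}$ disjoint from $E_2'$ is therefore a set on which $w$ is \emph{sub}harmonic, so the ``minimum principle for the superharmonic part'' you invoke does not apply; moreover such a component can and does meet $E_1\setminus\{e_1\}$ (indeed $w(1,\xi_1)<0$ while $w\to+\infty$ at $e_1$), so no contradiction is forthcoming. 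More fundamentally, what is needed is not merely information about the topology of $\{w<0\}$ in $\rn{n}$ but the much sharper statement that $\theta\mapsto w(r,\theta)$ changes sign \emph{exactly once}, from $+$ to $-$, on each sphere, i.e.\ a meridional monotonicity property of $w$. Maximum-principle arguments in $\rn{n}$ alone do not obviously deliver that. The paper obtains it (for $r=1$, which suffices after Baernstein) from Proposition 4.1, the computation that $\partial^2 h(1,\theta,\theta_1)/\partial\theta\,\partial\theta_1<0$: this yields, via the representation \eqref{5.3} of $P(1,\cdot,d,M)$ and $P(1,\cdot,d,\infty)$ as convex averages of $h(1,\cdot,\theta_1)$ over $\theta_1\in[0,\xi_1]$ and $\theta_1\in\{0\}\cup[\xi_2',\pi]$ respectively, the pointwise derivative comparison \eqref{5.7}--\eqref{5.10}, hence that $\theta\mapsto(P(\cdot,d,\infty)-P(\cdot,d,M))(1,\theta)$ is strictly increasing on $(\xi_1,\xi_2')$, and from there the contradiction. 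Your alternative mass-moving suggestion is likewise only a sketch and would require the same kind of mixed-partial sign information about $h$ (compare \eqref{5.23}); it is not an avoidance of the computation but another packaging of it. In short: the architecture of your proof agrees with the paper's, but the one step that carries the real content of the theorem — a quantitative comparison of the two extremal kernels — is left unproved, and the argument offered in its place is both incomplete and, at the point where the superharmonic minimum principle is cited, incorrect.
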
     
  As for the plan of this paper,  in  section  2  we set the stage for the proof of Theorems 1.1 and  1.2  by  stating and/or proving several definitions and  lemmas.  In section 3  we prove  Theorem 1.1.   In section  4  we prove   Proposition 4.1, a rather tedious  calculation of mixed partials for a certain function.   In section  5,   Proposition 4.1  is used  to  prove  Theorem 1.2.  After each theorem we make  remarks and queries.   
   
\section{Notation, Definitions, and  Basic  Lemmas} \label{sec2} 
\label{sec2}  
\setcounter{equation}{0} 
 \setcounter{theorem}{0}

 Throughout this paper  $ c (a_1, \dots, a_m ) $  denotes a positive constant $ \geq 1, $  depending only on  $ a_1, \dots, a_n, $  and $ n. $ Also    $  A  \approx B $   means     $    A/B $ is bounded above and below by positive constants whose dependence will be stated.    As in  section 1   let $ dx $ denote  Lebesgue measure on    $ \rn{n},   \bar F $
the closure of $ F, $  $ d ( x,  F ) $  the distance from  $ x  $  to  the  set $ F,  e_i $ the point in  $\rn{n} $  with 1 in the $i$ th position and zeroes elsewhere, $ \lan \cdot, \cdot \ran $ the inner product in $ \rn{n} $, 
$  B ( x, r ) = \{y \in \rn{n} :  |y - x | < r  \}, \,  \mathcal{H}^k =  $  Hausdorff  
  $k$ measure, in
$ \rn{n}, 0 <  k \leq n. $ 
\begin{definition}  If $ O $ is an open set in $ \rn{n}, n \geq 3,  $  and $  F  \subset O $  is compact,   then  the Newtonian capacity of  $ F, $   denoted $ C(F), $   
is defined to be   
\beq  \label{2.1}   C (F)  =   \inf  \int_{\rn{n}}  |\nabla \ph |^2  dx,  \eeq  where $ \nabla \ph $  denotes the gradient of $ \ph $  and   the infimum is taken over all 
$ \ph \in C_0^\infty (\rn{n}) $ with $ \ph \equiv 1 $ on $F.$  \end{definition} 

 \begin{remark}   Recall   that  a bounded open set  $ G \subset \rn{n} $  is said to be  a  Dirichlet domain if  for any  continuous  real valued function $ q $ defined on $ \ar G, $  there exists a  harmonic function $ Q  $ in   $ G $ with 
\[  \lim_{x\rar y }  \, Q (x) = q(y),  \mbox{ whenever }  y  \in \ar G. \]    {\em Wiener's  criteria}  for   a bounded open set   $  G $ to be a  Dirichlet domain  states:   If   
\beq \label{2.2}  \int_ {0}^1  r^{1-n}  C( B_r  ( y ) \cap \ar G ) dr  =  \infty  \mbox{ for all }   y \in \ar G,    \eeq   then $ G $   is a Dirichlet domain.  
\end{remark} 
  
Given $ \xi_1, \xi_2,  0 < \xi_1 < \xi_2 < \pi.  $ Let (as in  Theorem 1.1), 
\[  E_1 =  \{ x  \in \mathbb{S}^{n-1} :  x_1 \geq \cos \xi_1 \},  E_2 =  \{ x  \in \mathbb{S}^{n-1} :   x_1 \leq \cos \xi_2 \}. \]   Put   $   \Om ( \xi_1, \xi_2 )  
=  \rn{n} \sem ( E_1 \cup E_2 ). $   For $ i = 1,2,$ one can show (see   [H1] ) that  for $ i = 1, 2, $  \\
 \[   r^{n-2}   \leq  c ( \xi_1, \xi_2 ) \, C ( B (x, r ) \cap  E_i )   \mbox{ whenever }  x \in E_i \mbox{ and }    0 <  r  \leq   \min ( \xi_1, \pi - \xi_2 ). \]  Using  Wiener's criteria  and the boundary maximum principle for harmonic functions it   follows that given $ R > 2 $  there exists $ \om_{i, R} $   harmonic  in $ B (0, R ) \cap \Om ( \xi_1, \xi_2 ) $   with continuous boundary values $ \om_{i,R} = 1 $ on $ E_i $   and  $ \om_{i,R}  \equiv  0  $ on $ E_j, j \not = i. $    Using the boundary maximum principle for  harmonic functions we find  that if $ R_1 < R_2,$   then $ |  \om_{i ,R_1} -  \om_{i, R_2} | \leq R_1^{2-n}. $  From  this fact  we deduce that 
$ \lim_{R \rar \infty} \om_{i, R}  =  \om_i $ uniformly on compact subsets of $ \rn{n} $ and  $ \om_i, i = 1, 2, $  is  harmonic in $ \Om (\xi_1, \xi_2) $ with  continuous boundary values  1 on $ E_i $ and  0   on $ E_j,  j \not = i $. Also $ \om_i (x)  \rar 0   $ as $ |x| \rar \infty $   and  $ \om_i $   is superharmonic in  an open set containing    $ E_i, $  as well as,  subharmonic in an open set containing $ E_j,  j \not = i. $  From these observations and the Riesz  representation theorem for  sub -super harmonic functions (see  [H1]), it  follows that there exists finite positive Borel measures $ \nu_{i,j},  i, j = 1,2, $  with the support of $ \nu_{i,1} \subset E_i $ while the support of  $ \nu_{i,2} $ is contained in  $ E_j,  j \not = i.$   Moreover  
\beq \label{2.3}   \om_i (x)  =  \int_{E_i}  | x -  y |^{2-n}  d\nu_{i,1} (y)     -  \int_{E_ j}  |x-y|^{2-n} d \nu_{i,2} (y),   x \in \rn{n}, \eeq (once again $E_j \not = E_i$).

Next for $ i = 1, 2, $  let  
\[  \la_{i}  (\he_1) = (  \nu_{i,1 } + \nu_{i,2} )  ( \{ x \in \mathbb{S}^{n-1}  :  x_1 \geq \cos \he_1 \} )  \]   and note that $ \om_i $ has boundary values that are symmetric about the $ x_1 $ axis. Thus from the boundary maximum principle for harmonic functions and invariance of the Laplacian under rotations we have :   $ \om_i ( x )  =  \om_i ( r, \he ) $ for $ i = 1, 2,$  whenever  $ r = |x|, x_1 = r \cos \he. $ Using this fact and arguing as in  section 2 of [L] we get  the Lebesgue - Stieltjes integral : 
\beq \label{2.4}  \om_i (x)  =  \om_i  ( r, \he )  =   c_n  \int_0^\pi  h ( r, \he, \he_1)  d \la_i ( \he_1 ),   x \in  \rn{n}.    \eeq 
Here  \beq  \label{2.5}  h( r, \he, \he_1 )  = \int_0^{\pi}  ( 1 + r^2 - 2r \psi  ( \he, \he_1, \ph) )^{1 - n/2}  \sin^{n-3} \ph  \, d \ph  \eeq 
with  \beq \label{2.6} \psi( \he, \he_1, \ph )  = \cos \he \cos \he_1  + \cos \ph \sin \he \sin \he_1 \eeq 
and  $ c_n $ is chosen so that  $ c_n h( 0, \he, \he_1 ) \equiv 1. $    With this notation we prove, 
\begin{lemma} \label{lem2.3} Given $ r > 0,   \frac{\ar \om_1 (r, \he ) }{\ar \he}  < 0, $ and $  \frac{\ar \om_2 (r, \he ) }{\ar \he}  > 0 $ on   $ (0, \pi )  $  when  $   0 < r  < \infty,  
 r  \not = 1, $  and these inequalities also hold when  $ r = 1,  \he \in ( \xi_1, \xi_2 ). $   \end{lemma}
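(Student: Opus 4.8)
The plan is to differentiate the axially symmetric Laplace equation in the colatitude variable and to exploit the favourable sign of the zero--order term that appears. I will treat $\om_1$; the statement for $\om_2$ follows by the same argument applied to $\om_2(r,\pi-\he)$ --- harmonic off the two disjoint caps about $\pm e_1$ of angular radii $\pi-\xi_2$ and $\xi_1$ (disjoint since $\xi_1<\xi_2$), with boundary values $1$ and $0$ --- because the substitution $\he\mapsto\pi-\he$ leaves the operator below unchanged.

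Write $u=\om_1$, so $u$ is continuous on $\rn{n}$, harmonic and real analytic in $\Om(\xi_1,\xi_2)$, axially symmetric, $0\le u\le1$, $u\equiv1$ on $E_1$, $u\equiv0$ on $E_2$, and $u(x)\to0$ as $|x|\to\infty$. In spherical coordinates $u=u(r,\he)$ satisfies $u_{rr}+\frac{n-1}{r}u_r+\frac1{r^2}\big(u_{\he\he}+(n-2)\cot\he\,u_\he\big)=0$ off the screens; putting $v=u_\he$ and using $\ar_\he\cot\he=-\sin^{-2}\he$ gives
\[
\mathcal L v:=v_{rr}+\tfrac{n-1}{r}v_r+\tfrac1{r^2}\Big(v_{\he\he}+(n-2)\cot\he\,v_\he-\tfrac{n-2}{\sin^2\he}\,v\Big)=0\qquad\text{in }\Om(\xi_1,\xi_2).
\]
The crucial point is that the zero--order coefficient $-(n-2)/(r^2\sin^2\he)$ is \emph{negative} on $(0,\pi)$, so $\mathcal L$ obeys the maximum principle: $v$ can attain neither a positive maximum nor a negative minimum at an interior point, and Hopf's strong maximum principle is available.

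Next I would verify $v\le0$ along the boundary --- interpreted in a limiting sense --- of the half--strip $\{r>0,\ 0<\he<\pi\}$ slit along the two radial segments $S_1=\{r=1,\ 0\le\he\le\xi_1\}$ and $S_2=\{r=1,\ \xi_2\le\he\le\pi\}$ that carry $E_1$ and $E_2$. On the axis $\he=0,\pi$ (for $r\ne1$) and on $S_1,S_2$ away from their rims, $u$ is smooth up to the boundary from each side and constant there, so $v=0$ (on the axis this is also axial symmetry). As $r\to0$ one has $u(x)=u(0)+\langle\nabla u(0),x\rangle+O(|x|^2)$ with $\nabla u(0)$ a multiple of $e_1$, whence $v=-\sin\he\cdot(\mathrm{const}\cdot r)+O(r^2)\to0$ uniformly in $\he$; $v\to0$ as $r\to\infty$ by decay of the potential; and at the centres $(1,0)=e_1,\ (1,\pi)=-e_1$ the data are locally constant, so $u$ is smooth up to $\mathbb S^{n-1}$ from each side and $v\to0$. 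This leaves the two rims. Near $(1,\xi_1)$, in a $2$--plane transverse to the rim the complement of the screen is a slit disc, and the standard edge expansion for the Dirichlet problem reads $u=1+c_1\rho^{1/2}\sin(\alpha/2)+(\text{lower-order terms})$, with $(\rho,\alpha)$, $\alpha\in(0,2\pi)$, polar about the rim and the screen along $\alpha=0,2\pi$. Since $0\le u\le1$ and $\sin(\alpha/2)>0$ on $(0,2\pi)$, necessarily $c_1\le0$, and then $v=u_\he=\tfrac{c_1}{2}\rho^{-1/2}\sin(\alpha/2)+O(\rho^{1/2})\le0$ in a punctured neighbourhood of $(1,\xi_1)$ ($v\to-\infty$ there if $c_1<0$; the rim removable for $\mathcal L$ if $c_1=0$). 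The same holds near $(1,\xi_2)$, using $u\ge0$ there.

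Granting this, apply the maximum principle for $\mathcal L$ on $\{1/k<r<k,\ \delta<\he<\pi-\delta\}$ slit along $S_1,S_2$, with small balls removed about the two rims, and let the excision radii and $\delta$ tend to $0$ and $k\to\infty$: the boundary estimates force $v\le0$ throughout $\Om(\xi_1,\xi_2)$. Since $u$ is not radial ($u(e_1)=1\ne0=u(-e_1)$ while $|e_1|=|-e_1|$), $v\not\equiv0$, so Hopf's strong maximum principle gives $v<0$ at every interior point of the slit region --- that is, $\ar\om_1(r,\he)/\ar\he<0$ for $\he\in(0,\pi)$ when $r\ne1$, and also for $r=1$ when $\xi_1<\he<\xi_2$; and likewise $\ar\om_2(r,\he)/\ar\he>0$. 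I expect the main obstacle to be precisely the rim analysis --- the $\rho^{1/2}$ edge expansion and the sign of its leading coefficient --- together with making the maximum principle rigorous on this non--smooth, unbounded domain where $\mathcal L$ degenerates as $\he\to0,\pi$; that degeneracy is only apparent ($u$ is real analytic across the axis for $r\ne1$ and $v$ vanishes there) and is absorbed by the exhaustion in $\he$, while decay of $u$ at $0$ and $\infty$ controls the unboundedness. (Substituting \eqref{2.4}--\eqref{2.6} and tracking $\ar_\he h(r,\he,\he_1)$ is less clean, since that partial derivative is not one--signed in $\he$ for fixed $(r,\he_1)$.)
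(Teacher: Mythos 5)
Your proof is correct in outline but takes a genuinely different route from the paper's. The paper proves Lemma~\ref{lem2.3} by reflection in planes through the origin: for ${\bf n} = -\sin\he\,e_1 + \cos\he\,e_2$ and the reflection $x \mapsto \tilde x = x - 2\lan x, {\bf n}\ran\,{\bf n}$, it first checks by elementary linear algebra (\eqref{2.7}) that $E_1 \cap \bar H_2$ lies inside the reflection of $E_1 \cap \bar H_1$ (and symmetrically for $E_2$); Harnack's inequality then forces $\om_1(x) > \om_1(\tilde x)$ for $x \in H_1 \cap \Om(\xi_1,\xi_2)$ unless the domain were symmetric about $\ti\Si$, which it is not, and Hopf's boundary point lemma applied to $\om_1(x) - \om_1(\tilde x)$ along $\ti\Si$ converts this into $\ar\om_1/\ar\he < 0$ at the chosen colatitude. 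This moving-plane argument needs no rim analysis at all: the only boundary it touches is the flat hyperplane $\ti\Si$, where everything is smooth. Your argument differentiates the equation and runs a maximum principle for $v = u_\he$, which is more systematic but pushes all the difficulty onto the two edge circles of the caps. There you need the half-integer expansion and --- importantly --- the \emph{strict} inequality $c_1 < 0$; the ``rim removable if $c_1 = 0$'' branch does not close as written, since if $c_1 = 0$ the next candidate terms $\rho\sin\al$ and $\rho^{3/2}\sin(3\al/2)$ are not one-signed on $(0,2\pi)$ and hence give no sign for $v$ near the rim. The strict sign $c_1 < 0$ does hold (it is exactly a lower boundary Harnack bound of the type in [DS1]/[DS2], which the paper uses elsewhere for a different purpose), so the gap is fillable, but it is a genuine extra ingredient compared to the paper's route. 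Your closing remark --- that one cannot simply differentiate under the integral in \eqref{2.4} because $\ar_\he h$ is not one-signed --- is correct and is precisely what makes the reflection approach attractive: it never appeals to the representation \eqref{2.4} at all.
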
 
This  lemma  is essentially  trivial in  $ \rn{2} $  but perhaps not so obvious in  $ \rn{n},  n > 2,$  so we give  some details.      
\begin{proof} To  begin the proof of Lemma \ref{lem2.3}   let      $\ti  \Si $ be  a    plane   containing the origin with  unit normal, $ {\bf n  },  $   satisfying      $   \lan    e_1,  {\bf n  } \ran  = -  \sin  \he  $  for some $ \he,   0   <  \he <  \pi . $    
 Let   
\[   H_1 = \{ x : \lan x, {\bf n} \ran < 0 \} \mbox{ and }   H_2 = \{ x : \lan x, {\bf n} \ran  >  0\} .\] Clearly $ H_1 $ contains  $ e_1$ 
and  $ H_2 $  contains $ - e_1 . $   If  $ x    \in   \bar H_1 $    let   $ \ti x \in  \bar H_2  $  be the reflection of $ x \mbox{ in } \ti \Si $ defined by  $  \ti x  =  x  - 2 \lan x, {\bf n} \ran  \, {\bf n} . $      We  claim  that  if  
 \beq   \bea{l}  \label{2.7} (a)   \hs{.2in} \ti x \in  \bar  H_2 \cap E_1   \mbox{  then }  x \in  \bar H_1 \cap E_1   
      \\ \mbox{ while if }     \\ 
              (b)   \hs{.2in}  x \in  \bar H_1 \cap E_2   \mbox{  then }  \ti x \in  \bar  H_2 \cap E_2 .  
      \ea \eeq       

 To prove  our  claim we assume, as we may, that   
	$  \lan  e_i,  {\bf n} \ran = 0,  3 \leq i \leq n, $   which is permissible  since $ E_1,   E_2 $   are symmetric about the   $ x_1 $  axis. 
Then $ { \bf n} = - \sin   \he \,  e_1 + \cos  \he\,  e_2  $  for some $ 0  < \he  <  \pi $  and  $  \hat e  = \cos \he \, e_1 +  \sin \he \, e_2 \in  \ti \Si.$  
     We note that  if     $ x \in H_1 \cap  \mathbb{S}^{n-1},$   then   $ x =  \al  {\bf n }   + \be  \hat e  +  \ga  e'   $  where $  {\bf n}, \hat e, e' $ are orthogonal unit vectors with $ \lan e' , e_1 \ran = 0 $   and  $ \{ \al, \be, \ga  \} $ are  any real numbers with  $ \al  < 0 $ and 
$  \al^2 +\be^2 + \ga^2 = 1.$   Also \\ $ \ti x = - \al  {\bf n }   + \be  \hat e  +  \ga  e'   $  and $ H_2 \cap  \mathbb{S}^{n-1} =  \{ \ti x  :  x \in   H_1 \cap  \mathbb{S}^{n-1} \}. $  Using this notation we see that   
$ \ti x_1   =    - |\al| \sin \he  + \be   \cos \he   \leq  x_1  = |\al| \sin \he  +  \be   \cos  \he $ with strict inequality unless $ \al = 0. $    Thus  \eqref{2.7} $(a)$ is true.   \eqref{2.7} $(b) $  is proved similarly so we omit the details.    To  finish the proof of  Lemma \ref{lem2.3}   we observe  from  \eqref{2.7} $ (a) , $  the continuous boundary values of $ \om_1, $ 
and Harnack's inequality  for positive harmonic functions  that either  $  \om_1 (  x )  -  \om_1 (\ti x) > 0,   x  \in H_1, $ 
or  $ \Om ( \xi_1, \xi_2 )  $ is symmetric  about  $  \ti \Si. $    However  this cannot happen if   $ \lan {\bf n},   e_1 \ran = - \sin  \he,  \he \in (0, \pi), $   as  
$  E_1 \cap \rn{2} $ is not even symmetric  about the line  through the origin and  $ \hat e . $   Similarly   $ \om_2 ( x ) -  \om_2 ( \ti x )  < 0 $  in  $ H_1. $   
  Lemma  \ref{lem2.3} now  follows from these inequalities and the  Hopf boundary maximum principle for harmonic functions (see \cite{E}).   \end{proof}

Finally in this section we  state 
\begin{lemma} \label{lem2.4}  ${ \ds \frac{\om_1}{ 1 - \om_2}  ( x ) }\rar \hat \ga,  0  <  \hat \ga  \leq 1,   \mbox{ $ as $ } x \in  \Om ( \xi_1, \xi_2 )  \rar  y  \in  E_2 $    \mbox{ with }        $  y_1  =  \cos \xi_2. $  \end{lemma}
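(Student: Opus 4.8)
The plan is to reduce, using the rotational symmetry of $\om_1$ and $\om_2$, to a two-dimensional problem near the point $y$, to which the boundary Harnack principle at the tip of a slit applies. First I would record two elementary facts. By the strong maximum principle each $\om_i$ is harmonic and strictly between $0$ and $1$ in $\Om(\xi_1,\xi_2)$ (it is $\ge 0$, $\le 1$, and nonconstant), so $\om_1/(1-\om_2)$ is a well defined positive function there; and $\om_1+\om_2$ is a bounded harmonic function on $\Om(\xi_1,\xi_2)$ with continuous boundary value $1$ on $E_1\cup E_2$ and with $\om_1(x)+\om_2(x)\rar 0$ as $|x|\rar\infty$, so applying the maximum principle on $\Om(\xi_1,\xi_2)\cap B(0,R)$ and letting $R\rar\infty$ gives $\om_1+\om_2\le 1$, i.e. $\om_1/(1-\om_2)\le 1$ throughout $\Om(\xi_1,\xi_2)$. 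Thus only the existence of a \emph{positive} limit at $y$ remains. Moreover by \eqref{2.4} the ratio $\om_1/(1-\om_2)$ depends only on $(r,\he)$, so its limit at a rim point depends only on $y_1=\cos\xi_2$; we may therefore take $y=(\cos\xi_2,\sin\xi_2,0,\dots,0)$.

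Next I would localise. Since $\xi_1<\xi_2$, the cap $E_1$ is at positive distance from $y$, so for small $\de>0$ we have $\Om(\xi_1,\xi_2)\cap B(y,\de)=B(y,\de)\sem E_2$, on which $\om_1$ and $1-\om_2$ are positive harmonic functions vanishing continuously on $E_2\cap B(y,\de)$. Passing to the variables $(r,\he)$ with $r=|x|$, $x_1=r\cos\he$, and writing $W_i(r,\he)=\om_i(x)$, each $W_i$ satisfies the divergence form equation $\partial_r(r^{n-1}\sin^{n-2}\he\,\partial_r W)+\partial_\he(r^{n-3}\sin^{n-2}\he\,\partial_\he W)=0$ off the two arcs $\{r=1,\,0\le\he\le\xi_1\}$ and $\{r=1,\,\xi_2\le\he\le\pi\}$; near $(1,\xi_2)$ the coefficients are smooth and bounded above and below (because $0<\xi_2<\pi$), so there the operator is uniformly elliptic, and $E_2$ becomes the straight segment $\Ga=\{(1,\he):\xi_2\le\he\le\xi_2+\e\}$, a slit whose tip is exactly the target point $(1,\xi_2)$. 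Hence $W_1$ and $v:=1-W_2$ are two positive solutions of a uniformly elliptic equation on $B\sem\Ga$, with $B$ a small disk about $(1,\xi_2)$, vanishing continuously on $\Ga$.

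The last step is to invoke the boundary Harnack principle for $W_1$ and $v$: it gives, on a smaller disk, the two-sided comparison $W_1(z)/v(z)\approx W_1(z_0)/v(z_0)$ with $z_0$ a fixed reference point and constants depending only on $\xi_1,\xi_2,n$, and in its sharp form the H\"older continuity of $W_1/v$ up to the closure of that disk minus $\Ga$. H\"older continuity up to the tip forces $\hat\ga:=\lim_{x\in\Om(\xi_1,\xi_2)\rar y}\om_1(x)/(1-\om_2)(x)$ to exist, the lower comparison bound forces $\hat\ga>0$, and $\hat\ga\le 1$ was established above — which is exactly the assertion of the lemma.

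The point needing care is the justification of the boundary Harnack principle in this setting: since $\partial\Om(\xi_1,\xi_2)=E_1\cup E_2$ has codimension one and $\Om(\xi_1,\xi_2)$ lies on both sides of it, $\Om(\xi_1,\xi_2)$ is not an NTA domain and the classical Dahlberg--Jerison--Kenig statement does not apply verbatim. After the reduction above one is, however, in the classical ``boundary Harnack at the tip of a slit'' situation for a uniformly elliptic divergence form operator with smooth coefficients, which follows from the Carleson estimate together with Harnack chains running around the tip; alternatively one argues directly in $\rn{n}$, noting that $E_2$ is a piece of a $C^\infty$ compact hypersurface with boundary, so $B(y,\de)\sem E_2$ is a uniform domain whose complement satisfies a capacity density condition, and the boundary Harnack principle — with H\"older continuous ratio — holds in that generality.
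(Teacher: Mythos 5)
Your proof is correct and lands on the same key fact as the paper, namely H\"older continuity of the ratio $\om_1/(1-\om_2)$ up to the tip of the slit $E_2$ via a boundary Harnack principle, but you implement it differently. The paper stays in $\rn{n}$ throughout: it notes the decomposition $1-\om_2=\om_1+\ti\om$ from \eqref{2.8} (from which $\hat\ga\leq 1$), establishes by barriers that $\om_1/(1-\om_2)\approx 1$ near \emph{interior} points of $E_2$, spreads this by Harnack's inequality to a band $\Om(\xi_1,\xi_2)\cap\{|x_1-y_1|\leq\rho/2\}$, and then runs a direct oscillation--decay iteration (an ``osc$_{B(y,r/2)}\leq(1-1/c)\,$osc$_{B(y,r)}$'' estimate) to obtain H\"older continuity at the tip; in effect it gives a short self-contained proof of the needed BHP rather than citing one. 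You instead exploit axisymmetry to descend to a two-dimensional uniformly elliptic divergence-form problem in $(r,\he)$, where near $(1,\xi_2)$ the coefficients are smooth and bounded above and below, the cap $E_2$ becomes a straight segment with tip at the target point, and you invoke the slit-tip boundary Harnack principle there (alternatively appealing to the uniform-domain-plus-CDC framework in $\rn{n}$). You also get $\hat\ga\leq 1$ a priori from $\om_1+\om_2\leq 1$ by the maximum principle, which is equivalent to the paper's $\ti\om>0$. The two routes address the same obstruction that the paper flags explicitly, that $\Om(\xi_1,\xi_2)\cap B(y,\rho)$ is not NTA; your 2D reduction is a clean way around it, while the paper's oscillation-decay argument is closer in spirit to the De Silva--Savin slit-domain estimates (\cite{DS1}, \cite{DS2}) it relies on later in Section 3. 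Both are sound.
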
  \begin{proof} we note that  
\beq  \bea{l} \label{2.8}   1- \om_2 = \om_1 + \ti \om \mbox{ where $ \ti \om $ is  harmonic in $ \Om ( \xi_1, \xi_2 )$ with continuous boundary value 0} \\
\mbox{ on $  \ar \Om ( \xi_1, \xi_2) $ and $ \ti \om (x) \rar 1$  as  $ x \rar \infty. $} \ea \eeq        Lemma \ref{lem2.4} follows  from  \eqref{2.8}  and essentially a boundary Harnack inequality in \cite{KJ} even though  $  \Om( \xi_1. \xi_2) \cap B ( y, \rho),$ is not an NTA domain for any $ \rho > 0.$   To give  a  few details,  if  $ z \in \mathcal{S}^{n-1} $  
and $  \rho =  (y_1 - z_1 )/100 > 0,   $  then  $ \om_1/(1-\om_2) \approx  1  $  in  $ B ( z, \rho) \sem  E_2 $  is easily shown  using  barriers. Applying Harnack's inequality for positive harmonic functions, we then obtain   this inequality in  $ \Om ( \xi_1, \xi_2 ) \cap \{ x : |  x_1 - y_1|  \leq \rho/2 \}.  $  After  that one shows for some $ c = c ( \xi_1, \xi_2) > 1, $  
 \[ \mbox{ osc}_{B(y, r/2) }  \, \, \frac{ \om_1}{1-\om_2}  \leq  (1 - 1/c)  \mbox{ osc}_{B(y, r) }  \, \, \frac{ \om_1}{1-\om_2}     \mbox{ for } 0 < r \leq \rho/2 .  \]  
  Here $ \mbox{ osc}_{B(y, r/2) }  $  denotes oscillation on $ B ( y, r/2) \cap \Om (\xi_1, \xi_2). $    An iterative argument then gives H\"{o}lder continuity of 
 $ \frac{ \om_1}{1-\om_2} $  in  $ \Om(\xi_1, \xi_2) \cap B(y, \rho/2). $   
Thus  $ \hat \ga $ exists  and $ \hat \ga \leq 1 $ since $ \ti \om > 0 $ in  $ \Om ( \xi_1, \xi_2 ). $  
\end{proof} 

\section{Proof of Theorem 1.1}\label{sec3} 
\label{sec3}  
\setcounter{equation}{0} 
 \setcounter{theorem}{0}

\begin{proof}    
In  the  proof of  existence  for $ P ( \cdot,  d, M ) $  in  Theorem  1.1  we   assume that \\  $ 0 < \xi_1 <  \xi_2 <  \pi, $  as existence of  $ P ( \cdot,  d, M ) $  when $ E_1 = \{e_1\} $  was proved in [L] and if $ E_2 = \{-e_1\},$    
 $ P ( \cdot, d, M ) =  M  \om_1. $

 To begin the proof of this theorem  let          $    \ga  =  \inf_{ \he \in  ( \xi_1, \xi_2) }  \frac{ \om_1 (1, \he ) }{ 1 - \om_2 ( 1, \he ) } < 1. $  We assert that $ \ga = \hat \ga $ where $ \hat \ga $ is as in Lemma \ref{2.4}. 
Indeed   since $ \om_1 < 1 - \om_2  $ in  $ \Om ( \xi_1, \xi_2 ) $  and   $ \om_1, 1 - \om_2, $ have continuous boundary value $0$  when $ \xi_2 \leq \he \leq \pi, $   we see 
from  \eqref{2.8} that either 
\beq \label{3.1}  1 > \ga =   \frac{ \om_1 (1, \he_0 ) }{ (1 - \om_2) ( 1, \he_0 ) }  \mbox{ for some } \he_0 \in (\xi_1, \xi_2) \mbox{and/or }   \ga = \hat \ga . \eeq   To  show the first possibility cannot occur,  observe that it implies \\ $  \ga \om_2  ( 1, \he_0 )  + 
 \om_1 ( 1, \he_0 ) = \ga.$  Also,  $ \ga \om_1  + \om_2 $  is harmonic in  $ \Om ( \xi_1, \xi_2) $ with continuous boundary value $ \ga $ on $ E_2 $ and continuous boundary value $ 1  $ on $E_1.$  Using the fact that $ \ga $ is the minimum and  1 the maximum of $ \ga \om_2 + \om_1 $  on $ \mathbb{S}^{n-1} $   we can  essentially repeat the  proof of  Lemma \ref{lem2.3} to arrive at   
\beq \label{3.2}   \frac{\ar  (  \ga \om_2 + \om_1) (r, \he ) }{\ar \he}  < 0 \mbox{  when }  0 < \he < \pi, r \not = 0, 1,   \mbox{ and for } r=1,  \he \in  (\xi_1, \xi_2). \eeq
Thus  $  (\ga \om_2 +  \om_1)(1, \cdot )  $  is strictly decreasing on $ [\he_0, \xi_2 )$,  a contradiction  to   \eqref{3.1}.  

Let  $ V (x) =  a  ( \ga \om_2 + \om_1), $  where $ a > 0 $ is chosen so that  $ V (0) = 1. $   We shall show that  $  V =   P ( \cdot , d, M ) $  where $ M = a, d = a \ga$  to  complete the proof 
of  Theorem 1.1 except for showing the map is 1-1.  For this  purpose we note from  \eqref{2.3} that    
 \beq \label{3.3} V ( x )   =   \int_{\mathbb{S}^{n-1} }  |x-y|^{2 - n} d \si (y ) ,  x  \in \rn{n},  \eeq  where $ \si $ is  a  signed measure  with support in  $ \mathbb{S}^{n-1}$ and  of finite total variation.    Moreover   since $ V  \leq  M $  in $ \rn{n}$  with  $ V \equiv M $ on $ E_1, $   we see that $ V $ is superharmonic in an open set containing  $ E_1 $  and consequently (see  \cite{H1}),   
$ \si|_{E_1} $ is a positive Borel measure.  It remains to prove  $ \si|_{E_2} $ is a positive Borel measure in order  to conclude existence of $ P ( \cdot, d, M ) $ in Theorem 1.1.    
For this purpose let    $ \tau \in  \mathbb{S}^{n-1},$  with $  \tau_1 = \cos \he. $ 
Let $   x = r \tau,   0  < r < 1. $     Differentiating  \eqref{3.3}  we obtain  the Poisson integral (see \cite{H1} ) 
  \beq  \label{3.4}    r^{2 - n/2}  \,  \frac{ \ar  ( r^{n/2 - 1 } V ( r \tau  ) ) }{\ar r}  = (n/2 - 1 ) 
 \int_{\mathbb{S}^{n-1} }   \frac{ 1  - r^2}{ | r \tau -  y |^n}   d \si (y) .\,  \eeq    From properties  of the Poisson integral 
  (see [H1] ),  and  $ V (x) = V(r, \he), $   we deduce from \eqref{3.4} that   
\beq \label{3.5}  \lim_{r \rar 1}  \frac{ \ar  ( r^{n/2 - 1 } V ( r, \he   ) ) }{\ar r} =   
\de_n  (n/2 - 1 ) \frac{d \si}{d \mathcal{H}^{n-1}} = g (\he )   \eeq  for $ \mathcal{H}^{1} $ almost every $ \he 
\in  (0,\pi) $ where  $ \de^{-1}_n =  \mathcal{H}^{n-1}  ( \mathbb{S}^{n-1} ). $  
 Moreover,  
\beq \label{3.6}   g ( \he) = 0  =  (n/2 -1) V (1, \he) +   \frac{\ar V }{\ar r} ( 1, \he), 
\xi_1 < \he < \xi_2,   \eeq  since $ V $ is harmonic in $ \Om ( \xi_1, \xi_2 ). $    
Also since   $ \ar  \Om ( \xi_1, \xi_2) $ is  smooth at points $y \in  E_1 \cup E_2 $ with $ y \not =  \cos(\xi_1), \cos(\xi_2), $ it follows from Schauder type arguments (see [E]) that   
$ g $ is infinitely differentiable  on $ [0,\pi] \sem \{ \xi_1, \xi_2 \}. $  
From this observation and \eqref{3.2} we  deduce  that    \beq \label{3.7}   \frac{ d  -  V ( r,    \he_1  )  }{ 1 - r }  \leq   \frac{ d   - V ( r,  \he_2   ) }{ 1 - r } \,  \mbox{ for }    \xi_2 \leq \he_1 < \he_2 \leq   \pi \mbox{ and } r < 1.  \eeq     From \eqref{3.7},  \eqref{3.5},   and the mean value theorem from  calculus  we arrive at  
    \[      \liminf_{r \rar 1}  \left[ \frac{  d    -  V ( r, \xi_2 ) }{ 1 - r }\right]   \leq  -   d \, (n/2 - 1 )   +  g ( \he )  \mbox{ for } \he \in  (\xi_2, \pi].   \] 

 Thus to show  that  $ \si|_{E_2}  \geq 0, $  it suffices to show 
     \beq   \label{3.8}  \liminf_{r \rar 1}  \left[ \frac{  d    -  V ( r, \xi_2 ) }{ 1 - r }\right]   \geq  -   d \,  (n/2 - 1 ).  \eeq    

 To  prove   \eqref{3.8} we need some  boundary Harnack inequalities  in 
\cite{DS1} (see also \cite {DS2}). 
To set the stage for these inequalities  
put   $ y = (y_1, y' ) $  where  
\beq  \label{3.9}  y' = (y_2, \dots, y_{n}) \mbox{ and }   y = -  \frac{ x -  e_1 }{ | x - e_1 |^2 } - e_1/2   =  T(x) \mbox{ for }     x \in \rn{n}.   \eeq      
Then $ T $ maps  \beq  \label{3.10} \bea{l}  (a) \hs{.2in}  B(0,1)  \mbox{ onto }  \{y: y_1  >  0 \},  \\ \\    
   (b) \hs{.2in}  (\rn{n} \cup \infty )  \sem  \bar B(0, 1)    \mbox{ onto }  \{y: y_1  <   0 \},  \\ \\ 
  (c) \hs{.2in}     \mathbb{S}^{n-1}    \mbox{ onto }  \{y: y_1  =  0  \} \cup \infty. \\ \\
 (d) \hs{.2in}    E_1
  \mbox{  onto }   \{ y: y_1 = 0 , |y'|  \geq (1/2)  \cot (\xi_1/2) \} = F_1. \\ \\ 
  (e)  \hs{.2in}  E_2  \mbox{  onto }   
  \{ y: y_1 =  0 , |y'|  \leq  (1/2)  \cot  (\xi_2/2) \} =  F_2,     \ea \eeq 
  We note that    
    \[   x  =  T^{-1} (y)   =  e_1  -    \frac{y + e_1/2}{ |  y+e_1/2 |^2}  \mbox{ when }   y \in  \rn{n} \cup \infty .  \]   
 Using  this note, the Kelvin transformation (see \cite{H1}),  and translation invariance of  harmonic functions  we find that  if $ \hat u $   is   harmonic at $x,$  then   
      \[     \hat v ( y )   =   | y  + e_1/2 |^{2-n} \, \,  \hat u (  T^{-1} (y) ), \]  is harmonic at  $y \in \rn{n}$  (i.e,  in a neighborhood of $y$).   
    From this deduction we   conclude for fixed   $ \xi_1, \xi_2,  0 < \xi_1 <  \xi_2  < \pi, $  that if $ w_i (y) =  | y  + e_1/2 |^{2-n} \om_i (x),  i = 1, 2, $  then  
                \beq     \bea{l}  \label{3.11}  (a) \hs{.2in}   w_1, w_2 \mbox{ are continuous on  $ \rn{n} $ and harmonic in $ \rn{n} \sem (F_1 \cup F_2), $ }  \\  \\  (b) \hs{.2in}      w_1 \equiv 0   \mbox{ on $ F_2$ and } w_1 (y)  = | y + e_1/2|^{2 - n },    y \in  F_1, \\ \\  (c) \hs{.2in} 
    w_2  \equiv 0   \mbox{ on   $ F_1$ and }   w_2 (y)  =  |y + e_1/2|^{2-n},  \, y \in  F_2 ,
  \\  \\ (d) \hs{.2in}  w_1 (y), w_2 (y) \rar 0 \mbox{ as  $ y \rar \infty $, } \\ \\ 
 (e) \hs{.2in}  w_i ( \pm\,  e_1/2) = \om_i (0), i = 1, 2, \\  \\ (f)\hs{.2in} w_i ( y_1,  y'  ) = w_i (-  y_1, y' ), y \in \rn{n}, i = 1, 2,  \\ \\  (g) \hs{.2in}  w_i (y) = w_i ( y_1,  |y'| )  = w_i (y_1, \rho) , y \in \rn{n},  i = 1, 2 .   \ea \eeq             
  From \eqref{3.9}   we see   that  if  $ r = |x|, x_1 = r \cos \xi_2, $ then  
\beq   \bea{l}  \label{3.12}  (a) \hs{.2in}    y_1    =   {\ds \frac{(1/2) (1 -  r^2)}{ 1 + r^2  -  2r \cos \xi_2 }  \mbox{ and }   \rho  = |y'|  =  \frac{r \sin  \xi_2 }{ 1  + r^2 - 2 r \cos \xi_2   } , } \\ \\
(b) \hs{.2in}  1 - r^2  = {\ds  \frac{2 y_1}{ |y + e_1/2|^2} }  .    \ea \eeq     
 	Let $  w_3 (y)   =   | y + e_1/2|^{2-n} -   w_2 ( y ),  y  \in \rn{n}.  $   Then  $ w_3 = w_3 (  y_1, \rho) $ is harmonic in  $ \rn{n} \sem ( F_1 \cup 
  F_2   \cup \{-e_1/2\})  $   with  continuous  boundary values   $ \equiv 0 $ on $ F_2$   and   $ \equiv  | y + e_1/2 |^{2-n} $ on $ F_1. $   
	 We now are in a  position to use a boundary Harnack inequality  proved in  Theorem 3.3 of 
[DS1]   tailored to our situation. 	Let   $   \hat \rho =    \rho -  (1/2) \cot (\xi_2/2),$  and set    $  s = s(y) =    \sqrt{  \hat \rho^2 + y_1^2 }.    $ \,   Next put   \[  W_0 (y)   = \frac{1}{\sqrt{2}}\,
	        \sqrt{ s + \hat \rho } ,   \mbox{ whenever } y \in \rn{n}. \] Note that $ s (y) $ denotes the distance from  $ y $ to $ \ar' F_2,$ and $  \hat \rho $ denotes the signed distance  from $ y' $ to $ \ar' F_2 .$  Here  $ \ar'  F_2 $  denotes the boundary of $ F_2$ relative to $ \rn{n-1}$.  \\

\noi {\bf Theorem A}       Given  $k$  a  positive integer and   $ i = 1, 3,  $  there exists $  a_{i,j} (\hat \rho),  b_j (\hat \rho), $
   $  j  = 1, 2, \dots, $   infinitely  differentiable   whenever 
   \[  | \hat \rho  | <      \hat \la = \min [ (1/10) \cot (\xi_2/2), (1/10) \cot (\xi_1/2)  -  (1/10) \cot (\xi_2/2) ]   \]          with     $ a_{i,0} > 0 $  for $ i = 1, 3, $
     and  for which 	as  $  ( y_1, \rho )  \rar  ( 0, (1/2) \cot (\xi_2/2) )  $    
    \beq   \bea{l}   \label{3.13} (a)  \hs{.2in}   w_1 ( y_1, \rho )   =   W_0 ( y ) \,  {\ds ( \,   \sum_{j=0}^k   a_{1,j} (\hat \rho)  s^j\,  )  +  O ( W_0 (y)  s^{k+1/2} )  }      \\   
	 (b)  \hs{.2in} {\ds  \,  w_3  ( y_1, \rho)   =    W_0 ( y )     ( \sum_{j=0}^k   a_{3,j} (\hat \rho)  s^j )     +  y_{1}  \sum_{0 \leq  j \leq (k-1)/2 }  b_{j}  (\hat \rho)  y_1^{2j} \,  }  \\ \quad  \quad  \,  \, \, +    \, \, \,   O ( W_0 (y) \,  s^{k + 1/2} ) .       \ea  \eeq 
Constants in the  big  O  terms  depend on  $ k, n, \xi_2, \xi_1, $  and  the  $ C^{k + 3/2} $ norms of  $ a_{i,j}  ( \hat \rho) ,  b_j  (\hat \rho), $ \\  for $  i = 1, 3,   1 \leq j \leq k,$ on $ [0, \hat \la].$    We  note that   $ w_1, w_3 $  in   \eqref{3.13}  $ (a), (b) $                                                                                                                                                                       
 have slightly different  expansions  since   $ w_1, $ is even in $ y_1 $ while $ w_3 $   is  not.  
   From   Lemma \ref{lem2.4},  $ \la = d/M, $ \eqref{3.10}, and  \eqref{3.11},    we find   that   as  $ (  y_1, \rho )  \rar  ( 0, (1/2) \cot (\xi_2/2))  $  through coordinates of points not  in  $ F_2, $               \[  \frac{  w_1 ( y_1,  \rho ) }{ w_3 (y_1,  \rho) } \rar  d/M .  \]  which in view of   \eqref{3.13}   $ (a), (b) $  implies  that       
 \beq  \label{3.14}  a_{1,0} (0)  =  (d/M)  a_{3, 0}(0).  \eeq  

Let  $ y = ( y_1, \rho), $  be as in \eqref{3.12}.   From \eqref{3.12} $(a)$  we  find that as $ r \rar 1, $  
\beq  \label{3.15} y_1 \approx 1 - r, \,  \rho = \frac{ r \sin \xi_2}{ (1-r)^2  + 4r \sin^2 (\xi_2/2) }  = O ( y_1^2)  + (1/2) \cot (\xi_2/2) ,  \eeq   where constants  depend only  on  $ \xi_1, \xi_2, n, $  provided $ r \geq 1/2. $  
 From   \eqref{3.11}  $ (f)$,    \eqref{3.13} $ (a), (b), $ \eqref{3.14},   and  $ w_2 ( y ) = |y+e_1/2 |^{2-n} - w_3 (y), $  we find that   if   $ \, \bar  y  =  ( - y_1, \rho), \\  y_0    =  ( y + \bar y )/2 ,  $  then 
       \beq  \bea{l} \label{3.16}  U ( y ) =  M w_1 ( y)  +  d w_2 ( y )  =      
\\  \\ \hs{.1in} (d/2)\,  (   | y  + e_1/2 |^{ 2 - n}   +     | \bar y  + e_1/2 |^{ 2 - n})      +  O ( y_1^{3/2} )  \\ \\ \hs{.1in}  =   U (y_0)   +  O (y_1)^{3/2}  = d   | y_0 + e_1/2 |^{2-n} +  O (y_1)^{3/2}   \mbox{ as } y_1 \rar 0.  \ea  \eeq  
 Finally  we  prove  \eqref{3.8}  and thus finish the proof of  Theorem 1 up to showing the map is 1-1.   Let    $ \ti  U  ( y )  =  | y + e_1/2 |^{n - 2}  U ( y ) . $   From   \eqref{3.12} $(b)$ ,  \eqref{3.11} $(f)$,  \eqref{3.16},     we see that   
\beq  \bea{l}  \label{3.17} 
      \, { \ds  \liminf_{ r \rar 1_-}  \frac{d   -  V (r, \xi_2)  }{ 1 - r} }   = \,      2  \, { \ds  \liminf_{ r \rar 1_-}  \frac{d   -  V (r, \xi_2)  }{ 1 - r^2} }    \\ \\   =  - 
   {\ds   \limsup_{y_1 \rar 0}   \left(  | y + e_1/2|^2     \frac{ [ \ti U ( y )  -  \ti U ( y_0 )]}{2 y_1} \right) }   =  \\ \\ {\ds - (1/2)   |y_0 + e_1/2|^2  U (y_0 )   {\ts \frac{\ar}{\ar y_1}} ( | y + e_1/2|^{n - 2 }  ) (y_0)  =  -  \frac{(n-2) d}{2} } .     \ea \eeq   From \eqref{3.17} we conclude \eqref{3.8}. 

Finally showing the     map $ (\xi_1, \xi_2) \rar (d, M) $ is 1-1 follows easily from the maximum  principle for harmonic functions. Indeed suppose  $ ( \xi_1, \xi_2) ,  (\xi_1', \xi_2') $  are both mapped into  $ (d,M). $  Define $ E_1, E_2 $ and $ E_1',  E_2'$ relative to  $ \xi_1, \xi_2, $ and  $ \xi'_1, \xi'_2, $ respectively. Let $ P, P' $ be the corresponding potentials in Theorem 1.1.  Then either 
$ E_1 \subset E_1'    $   or vice versa and likewise for $ E_2, E_2' $.   If $ E'_1 \subset E_1, $ suppose first that  $ E'_2 \subset E_2  $  Then using the fact that $ P $ is superharmonic  and  $ P' $ is harmonic   in 
$ \rn{n} \sem ( E_1' \cup E_2' ) $  as well as that  both potentials have  the same boundary values,  we obtain from the maximum principle for harmonic functions that  $ P' (0) = 1 < P(0) = 1$ unless $ E_1 = E_1', E_2 = E_2' . $  If  $ E_2 \subset E'_2,$   we  compare  boundary values of 
$ P, P' $  on $ E_1 \cup  E'_2. $   Using   the  maximum principle for harmonic functions 
and \eqref{1.1} $ (c)$  we once again  get   a  contradiction to \eqref{1.1} $(d)$ unless $ P = P'. $        Interchanging the roles of $ P, P'$  we  obtain  $ P = P' $ in all cases. 
 The proof of Theorem 1.1  is now complete.  \end{proof} 
\begin{remark} 
    More sophisticated arguments in \cite{DS2}    yield  that   $  P ( \cdot,  d, M)  $  is   $ C^{1, 1/2} $   in an open neighborhood  of   
$  \{ x \in  \mathbb{S}^{n-1} :  x_1  =  \cos \xi_2  \}.    $ 
 Fix  $ \xi_2 \in  ( 0,  \pi) $  and let     $  \xi_1 $  vary in $ (0, \xi_2).  $   Then from \eqref{2.8}, the maximum principle for harmonic functions,   and  $ \hat \gamma = d/M$ in    Lemma  2.4  we see  that    
\beq  \label{3.18}   d/M  \mbox{ is increasing as a function of  $ \xi_1 $   to say $ d' / M' . $  }  \eeq   Using this fact it is easily seen from Theorem 1 and the maximum principle for \\ harmonic functions   that  $ M $ decreases as a function of $ \xi_1 $ so  $ M' < M . $    Moreover   $ d', M' \rar  1  $  as  $  \xi_1 \rar  \xi_2. $  In  view of our conjecture  and $ \rn{2} $ results  it appears likely that  \beq \label{3.19} \mbox{$ d $ also increases  as a function of  $ \xi_1. $} \eeq    However so far we have not been able to prove this. 
\end{remark} In  $ \rn{2} $  we can prove \eqref{3.19},  without relying on [BL1],    as   follows:  From  \eqref{1.3}, \eqref{1.4},   and  a Schwarz- Christoffel type argument we  have 
\beq  \label{3.20}  \frac{ \ar P ( r, \he,  d, M )}{ \ar \he }  =   \frac{ i  z \,   \ar P (  z ,   d,   M ) }{ \ar z }  = \mbox{ Re } \left[  i  \left(  \frac{1 + z^2 - 2 a z}{ 1 + z^2 - 2 b z } \right)^{1/2}  \right]    \mbox{ for }  z \in \bar  B ( 0, 1) \sem \{ \xi_1, \xi_2 \}. \eeq   where $b = \cos \xi_1,  a  = \cos \xi_2. $   If   $ \xi_1 <  \xi_1'  < \xi_2 = \xi_2' $  and $ P ( z,  d', M' ) $  denotes the extremal potential corresponding to $ \xi_1', \xi_2', $ then  \eqref{3.20}   holds for this potential  with  $ b $ replaced by $ b' = \cos \xi_1'   < b $  .   Moreover    
\beq  \label{3.21}  d' - d  =    \int_0^1   \left[  \left( \frac{1 + r^2 + 2 a r}{ 1 + r^2 + 2 b' r}  \right)^{1/2}   -  
 \left( \frac{1 + r^2 + 2 a r}{ 1 + r^2 + 2 b r } \right)^{1/2} \right] dr/r    > 0.  \eeq     
Also   from   \eqref{3.20}  for $ P (\cdot,d, M),  P(\cdot, d', M'), $ we  see that   if $  \xi_1'  \leq  \he_0  <  \xi_2, $  then   \beq\bea{l}  \label{3.22}   P ( 1, \he_0, d' , M' ) -  P ( 1, \he_0, d, M )   > \\   \\       P ( 1, \he_0, d' , M' ) -  P ( 1, \he_0, d, M )  +  d - d'    =  \\ \\  \hs{.1in}  { \ds     
  \int_{\he_0}^{\xi_2}    \left[  \left( \frac{\cos \he  -  \cos \xi_2 }{\cos \xi_1' -  \cos \he }  \right)^{1/2} \, - \, 
 \left( \frac{\cos \he  - \cos \xi_2 }{ \cos \xi_1  -  \cos \he   } \right)^{1/2} \right]  d\he } > 0 \ea \eeq   Using  \eqref{3.21}, \eqref{3.22},  we  can show that  
\beq  \label{3.23}  \max_{\tau \in [0, \pi] }   \int_0^{\tau} [  P ( 1, \he, d' , M'  ) -    P ( 1, \he, d , M  )  ] d \he = 0  . \eeq 
 Indeed   from  $ M' < M,  \xi_1 < \xi_1', $   the fact that  $ P ( \cdot, d, M )$   is strictly decreasing on  $ ( \xi_1, \xi_2 )  $ and  the first derivative test for maxima,    we  find  first that  the maximum in \eqref{3.23} cannot occur at  some  
 $ \tau  \in (0, \xi_1').$   Moreover  by the same reasoning and  \eqref{3.21},  \eqref{3.22},  this maximum  cannot  occur when $ \tau \in [\xi_1', \pi). $    
 Finally  the integral in \eqref{3.23} $= 0 $ for $ \tau = 0, \pi. $   
From  \eqref{3.23},  \eqref{1.8},  as well as    Baernstein's Theorem (mentioned after \eqref{1.8}),  one can conclude   that  \eqref{1.2}   in $ \rn{2} $  is valid for    
$p = P ( \cdot, d' , M' ). $   

In view of  the above  $\rn{2}$  results,  one  wonders  if in $ \rn{n}, n \geq 3, $  it is true that  for $ \he \in (\xi_1, \xi_2), $ 
\beq  \frac{ \ar^2 P ( 1, \he, d, M )}{\ar \he\,  \ar \xi_1 } < 0 \mbox{ and }  \frac{ \ar^2 P ( 1, \he, d, M )}{\ar \he  \, \ar \xi_2 } >0. \eeq 
 
\begin{remark} 

Another question of interest to us,  is  to what extent  does  Theorem 1.1 generalize  to other PDE' s ?   For example can one replace harmonic in Theorem 1.1 by $ p$ - harmonic  when 
$ 1 < p < n$.    To be more specific, if  $ 1 < p < n, $   does  there exist a  super   $p$-harmonic function   $ u  > 0 $  on  $ \rn{n} $  with  $ u (x) \rar 0 $ as $ x \rar \infty,$  satisfying   
$ (a)-(d) $ of Theorem 1.1  for some  choice of  $ (\xi_1, \xi_2), 0  \leq  \xi_1 < \xi_2 \leq \pi,    $  and 
   $ (d, M),  0  < d < 1 < M \leq \infty $  (see \cite{HKM} for relevant definitions).  
\end{remark} 

\section{Proof of Proposition 4.1 }\label{sec4} 
\label{sec4}  
\setcounter{equation}{0} 
 \setcounter{theorem}{0}
  Recall  from  \eqref{2.5}, \eqref{2.6}   that  \beq  \bea{l} \label{4.1}  h ( r, \he, \he_1)  =    {\ds \int_0^{\pi}   \left( 1 + r^2 - 2r  \psi (\he, \he_1, \ph) \right) ^{1-n/2}   (\sin \ph)^{n-3}  \,  d \ph }  
\mbox{ with } \\ \\   
\psi (\he, \he_1, \ph)  =  \cos \he \cos \he_1 + \cos \ph \sin \he \sin \he_1 . \ea \eeq Theorem 1.2  is an easy consequence of the following  proposition.  
\begin{proposition} \label{prop4.1}  
 $ \frac{ \ar^2  h (1, \he, \he_1) }{ \ar \he \, \ar \he_1}  < 0 $  whenever $ \he_1  \not = \he, \he, \he_1  \in [0,\pi]. $  \end{proposition}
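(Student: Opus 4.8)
The plan is to specialize to $r=1$, differentiate twice under the integral sign, and reduce Proposition~\ref{prop4.1} to an integral inequality whose verification is the substantial (if elementary) part. Setting $r=1$ in \eqref{4.1} gives $1+r^2-2r\psi=2(1-\psi)$, and the trigonometric identity $\psi(\he,\he_1,\ph)=\cos^2(\ph/2)\cos(\he-\he_1)+\sin^2(\ph/2)\cos(\he+\he_1)$ yields
\[
1-\psi=2\cos^2(\ph/2)\sin^2\tfrac{\he-\he_1}{2}+2\sin^2(\ph/2)\sin^2\tfrac{\he+\he_1}{2}\ \ge\ 2\sin^2\tfrac{\he-\he_1}{2},
\]
using $\sin^2\tfrac{\he-\he_1}{2}\le\sin^2\tfrac{\he+\he_1}{2}$ on $[0,\pi]^2$. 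For $\he\ne\he_1$ this lower bound is a positive constant, uniform in $\ph\in[0,\pi]$ and locally uniform in $(\he,\he_1)$, so $h(1,\he,\he_1)=2^{1-n/2}\int_0^\pi(1-\psi)^{1-n/2}(\sin\ph)^{n-3}\,d\ph$ is smooth near $(\he,\he_1)$ and we may differentiate under the integral. Since $\ar_\he\psi$, $\ar_{\he_1}\psi$ and $\ar_\he\ar_{\he_1}\psi=\sin\he\sin\he_1+\cos\ph\cos\he\cos\he_1$ are affine in $\cos\ph$, differentiating $(1-\psi)^{1-n/2}$ twice gives
\[
\frac{\ar^2 h(1,\he,\he_1)}{\ar\he\,\ar\he_1}=2^{1-n/2}\Big(\tfrac n2-1\Big)\int_0^\pi(1-\psi)^{-n/2-1}\,N\,(\sin\ph)^{n-3}\,d\ph,\qquad N:=\tfrac n2(\ar_\he\psi)(\ar_{\he_1}\psi)+(1-\psi)\,\ar_\he\ar_{\he_1}\psi .
\]
Because $n\ge3$, the prefactor $2^{1-n/2}(\tfrac n2-1)$ and $(1-\psi)^{-n/2-1}$ are positive, so Proposition~\ref{prop4.1} is equivalent to $\int_0^\pi N\,(1-\psi)^{-n/2-1}(\sin\ph)^{n-3}\,d\ph<0$.

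Next I would put this in a normal form. By the invariances $h(1,\he,\he_1)=h(1,\he_1,\he)=h(1,\pi-\he,\pi-\he_1)$ (each fixing $\psi$ after $\ph\mapsto\pi-\ph$) it suffices to treat $0<\he_1<\he$ with $\he+\he_1\le\pi$. In the light-cone variables $\alpha=\tfrac{\he-\he_1}{2}$, $\beta=\tfrac{\he+\he_1}{2}$ (so $0<\alpha<\beta\le\pi/2$), and with $\hat H(\alpha,\beta):=h(1,\alpha+\beta,\beta-\alpha)$, one has $\ar_\he\ar_{\he_1}=\tfrac14(\ar_\beta^2-\ar_\alpha^2)$, while the substitution $\chi=\ph/2$ collapses the above formula for $1-\psi$ to the clean representation
\[
\hat H(\alpha,\beta)=\int_0^{\pi/2}\big[\sin^2\alpha\cos^2\chi+\sin^2\beta\sin^2\chi\big]^{1-n/2}(\sin\chi\cos\chi)^{n-3}\,d\chi .
\]
Since $\hat H(\alpha,\beta)=\hat H(\beta,\alpha)$ (substitute $\chi\mapsto\pi/2-\chi$), the claim $\ar_\he\ar_{\he_1}h<0$ is exactly $\ar_\beta^2\hat H<\ar_\alpha^2\hat H$ for $\alpha<\beta$, i.e.\ $\ar_1^2\hat H$ is strictly larger at $(\alpha,\beta)$ than at $(\beta,\alpha)$. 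The degenerate case $\he_1=0$ (that is $\alpha=\beta$) gives equality by this same symmetry and is dealt with by continuity, or by direct evaluation of the then one-variable integral.

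For the estimate itself, differentiate the last display twice under the integral. Writing $w=w(\chi)=\sin^2\alpha\cos^2\chi+\sin^2\beta\sin^2\chi$ one finds $\ar_\beta^2\hat H-\ar_\alpha^2\hat H$ equals $(\tfrac n2-1)$ times
\[
\int_0^{\pi/2}\Big[\tfrac n2\big(\sin^2 2\beta\,\sin^4\chi-\sin^2 2\alpha\,\cos^4\chi\big)+2w\big(\cos 2\alpha\,\cos^2\chi-\cos 2\beta\,\sin^2\chi\big)\Big]w^{-n/2-1}(\sin\chi\cos\chi)^{n-3}\,d\chi ,
\]
so one must show this integral is negative. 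The bracketed factor is not of one sign --- it equals $2\sin^2\alpha\big((2-n)\cos^2\alpha-1\big)<0$ at $\chi=0$ and $2\sin^2\beta\big((n-2)\cos^2\beta+1\big)>0$ at $\chi=\pi/2$ --- so no pointwise bound works and the inequality rests entirely on a cancellation. I would extract it by substituting $v=w(\chi)$, which runs monotonically over $[\sin^2\alpha,\sin^2\beta]$ and turns the integral into a Beta-type integral $\int_{\sin^2\alpha}^{\sin^2\beta}v^{-n/2-1}\big((v-\sin^2\alpha)(\sin^2\beta-v)\big)^{(n-4)/2}Q(v)\,dv$ with $Q$ a quadratic; then, for $n\ge4$, integrating by parts in $\chi$ (the boundary terms vanishing since $(\sin\chi\cos\chi)^{n-3}$ does) recasts the contributions as a manifestly negative combination, using that $v\mapsto v^{1-n/2}$ is decreasing and convex and that $\sin^2\alpha<\sin^2\beta$; the case $n=3$ (where $\hat H$ is a complete elliptic integral) is handled directly from the explicit integrand. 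An essentially equivalent route skips the light-cone change of variables and argues with $N$ directly after the substitutions $u=\cos\ph$, $v=1-\psi$.

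The only non-routine point --- and the reason Section~\ref{sec4} is, in the author's words, a rather tedious calculation --- is this last step: the integrand changes sign on $(0,\pi/2)$, the positive part near $\chi=\pi/2$ is dominated only after the cancellation is made explicit, and one must track the orders in $\sin^2\beta-\sin^2\alpha$ carefully to see that the negative part wins. Everything else (specializing $r$, differentiating under the integral, the changes of variables, the symmetry reductions) is bookkeeping.
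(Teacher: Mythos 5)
Your reduction to the light-cone variables $\alpha=\tfrac{\he-\he_1}{2}$, $\beta=\tfrac{\he+\he_1}{2}$, the identity $\ar_\he\ar_{\he_1}=\tfrac14(\ar_\beta^2-\ar_\alpha^2)$, and the representation $\hat H(\alpha,\beta)=\int_0^{\pi/2}\bigl[\sin^2\alpha\cos^2\chi+\sin^2\beta\sin^2\chi\bigr]^{1-n/2}(\sin\chi\cos\chi)^{n-3}\,d\chi$ are correct and genuinely different from the paper's route, as is the second-derivative formula you derive. But you stop exactly where the content begins. You observe (correctly) that the resulting integrand changes sign and that no pointwise bound suffices, and then you merely assert that substituting $v=w(\chi)$ and ``integrating by parts in $\chi$'' yields a ``manifestly negative combination.'' You never write down the quadratic $Q(v)$, never exhibit the integration by parts, never verify the sign of what comes out, flag $n=3$ as needing a separate unspecified argument, and do not address the fact that for $n=3$ the exponent $(n-4)/2=-1/2$ makes the transformed $v$-integral singular at both endpoints, so the proposed integration by parts needs justification there. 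That sign claim is the entire content of Proposition~\ref{prop4.1}; the paper devotes \eqref{4.9}--\eqref{4.26} to it, by a completely different device: expanding $\int_0^\pi(1-\psi)^{-k}(\sin\ph)^{n-3}d\ph$ as a hypergeometric series in $(a/b)^2$ with $a=\sin\he\sin\he_1$, $b=1-\cos\he\cos\he_1$, rearranging into terms $T^1_l,T^2_l$, and proving by induction on $l$ that the telescoped partial sums $\sum_{k\le l}T^1_k+\sum_{k\le l-1}T^2_k$ are nonpositive, the sign falling out of \eqref{4.24} and \eqref{4.26}. Your plan may well be executable and could be cleaner if carried through, but as written it is a reformulation plus a promissory note, not a proof.

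A further caution about the boundary. You say that $\he_1=0$ (so $\alpha=\beta$) ``gives equality by this same symmetry and is dealt with by continuity.'' The first half is right and is actually worth flagging: $\frac{\ar h}{\ar\he_1}(1,\he,\he_1)$ vanishes identically on $\he_1\in\{0,\pi\}$ (for every $\he$, since $\int_0^\pi\cos\ph\,(\sin\ph)^{n-3}d\ph=0$), so the mixed partial is $0$ there, not $<0$, and the strict inequality in the statement of Proposition~\ref{prop4.1} fails at that boundary; the paper's own series has $a=0$ there and yields $D=0$. But precisely for that reason it cannot be ``dealt with by continuity'' --- the correct conclusion is that the Proposition should be read for $\he,\he_1\in(0,\pi)$, $\he\ne\he_1$, which suffices for the use made of it in the proof of Theorem~\ref{thm 2}.
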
 
\begin{proof}

Since $ h(1, \he, \he_1) $  is symmetric in  $ \he, \he_1, $   we assume as we may that \\ $ 0 \leq \he_1 < \he \leq \pi.   $   
 Differentiating \eqref{4.1} we obtain    
\beq  \bea{l}  \label{4.2} 
  {\ds \frac{ \ar h ( 1, \he, \he_1 )}{ \ar \he}  =  (n - 2 )  2^{-n/2}  \int_0^{\pi}  \frac{\ar \psi}{\ar \he}    
( 1 - \psi )^{-n/2} ( \sin \ph )^{n-3} d \phi}  \\ \\ =  {\ds  (n - 2 )  2^{- n/2 }  \int_0^{\pi}  [   \cos \phi \cos \he  \sin \he_1  - \sin \he \cos \he_1 ]  f ( \he, \he_1, \ph ) d \ph \ds }  
 \ea  \eeq  
 where \[  f ( \he, \he_1, \ph ) =         
   ( 1 - \psi (\he, \he_1, \ph ) )^{-n/2} ( \sin \ph )^{n-3} \,  .  \]   From\eqref{4.1} we  see that \eqref{4.2} can be rewritten as    
\beq  \bea{l}  \label{4.3} {\ds \frac{ \ar h ( 1, \he, \he_1 )}{ \ar \he}   =   - (n/2 -1)  \cot \he \, h (1, \he,  \he_1 )  +      
  \frac{ (n-2) 2^{-n/2}  (  \cos \he - \cos \he_1 )}{ \sin \he} \int_0^\pi  f  ( \he, \he_1,  \ph) d \ph }. 
 \ea \eeq    Taking second partials in \eqref{4.3} we have 
\beq \bea{l} \label{4.4} 
{\ds \sin \he  \frac{ \ar^2 h ( 1, \he, \he_1 )}{ \ar \he \, \ar \he_1}   =   - (n/2 -1)  \cos \he \,  \frac{ \ar h (1, \he,  \he_1 ) }{ \ar \he_1}  +      
  (n-2) 2^{-n/2}   \sin \he_1  \int_0^\pi  f  ( \he, \he_1,  \ph) d \ph } 
\\ \\  \hs{.874in} +  (n-2) 2^{-n/2}  { \ds (  \cos \he - \cos \he_1 ) \int_0^\pi  \frac{ \ar f  ( \he, \he_1,  \ph)}{ \ar \he_1}   d \ph } =  J_1  + J_2  + J_3 \ea \eeq  Since $ h $ is symmetric in $ \he, \he_1 $ we can interchange $ \he, \he_1 $ in 
\eqref{4.2}, \eqref{4.3},  to get 
\beq  \label{4.5}  J_1 =    ( n/2 - 1 )^2  \cos \he  \cot \he_1   h ( 1, \he, \he_1)      
 -   \frac{ (n-2)^2  2^{-(n+2)/2}  \cos \he  (  \cos \he_1 - \cos \he )}{ \sin \he_1} \int_0^\pi  f  ( \he, \he_1,  \ph) d \ph .  \eeq  Moreover,  as in  \eqref{4.3}  
\beq  \bea{l}  \label{4.6}   J_3 = - (n/2) (n-2) 2^{-n/2}  ( \cos \he - \cos \he_1 )  \cot \he_1   { \ds \int_0^{\pi}   f ( \he, \he_1, \ph) d \ph} \\ \\     - ( n - 2)  (n/2)  2^{-n/2} { \ds \frac{( \cos \he_1 - \cos \he )^2 }{ \sin \he_1 } \int_0^\pi ( 1 - \psi )^{-1}  f ( \he, \he_1, 
\ph ) d \ph }  =  J_4 +   J_5 \ea  \eeq   Adding $ J_2 $  in \eqref{4.4} to  $ J_4 +  J_5 $ in \eqref{4.6}  we get   
\beq \bea{l} \label{4.7}  J_2 + J_4 + J_5  =    [ (n-2) 2^{-n/2} \sin \he_1  - (n/2) (n-2) 2^{-n/2}  ( \cos \he - \cos \he_1 )  \cot \he_1  ] { \ds \int_0^{\pi}   f ( \he, \he_1, \ph) d \ph} \\ \\ 
- ( n - 2)   (n/2)  2^{-n/2} { \ds \frac{( \cos \he_1 - \cos \he )^2 }{ \sin \he_1 } \int_0^\pi ( 1 - \psi )^{-1}  f ( \he, \he_1, 
\ph ) d \ph } \ea \eeq  Finally we arrive at  
\beq  \bea{l}  \label{4.8} (n/2 - 1)^{-1}    2^{n/2 - 1} (J_1 + J_2 +  J_4 + J_5)   =    ( n/2 - 1 )  \cos \he  \cot \he_1 \, {\ds  \int_0^{\pi} ( 1 - \psi ) f ( \he, \he_1, 
\ph )  d \ph }  \\ \\ \hs{.2 in}{ \ds   + \, \left[  \, (n/2 - 1 ) \frac{  ( \cos \he_1 - \cos \he )^2}{ \sin \he_1}   +  \,  \frac{  1 - \cos \he \cos \he_1 }{ \sin \he_1 } \right]
    \int_0^\pi  f ( \he, \he_1, \ph ) d \ph  }   \\ \\  \hs{.2 in}-   (n/2)  { \ds \frac{( \cos \he_1 - \cos \he )^2 }{ \sin \he_1 }  \int_0^{\pi} ( 1 - \psi )^{-1} f ( \he, \he_1, \ph )}  d \ph     \ea   \eeq   

From \eqref{4.4} - \eqref{4.8}  we conclude   that to finish the proof of Proposition 4.1  we need to show that the right hand side of \eqref{4.8}  is negative.  To   do this  we let $ a = \sin \he \sin \he_1,  b = 1 - \cos \he \cos \he_1 $  and use the beta function  to  calculate in terms of our previous notation: 
\beq  \label{4.9}   A = b^{n/2 - 1}\int_0^{\pi}  ( 1 - \psi )^{ 1 - n/2} (\sin \ph )^{n-3} d \ph  
 =   \left( \sum_{l=0}^{\infty}  \frac{ (n/2 - 1)_{2l} \,  \Ga (n/2 - 1 ) \,   \Ga ( l + 1/2) }{ (2l)! \, \, \Ga (  n/2 +  l - 1/2) }  \left(\frac{a}{b}\right)^{2l}  \right)     \eeq  
where $ \Ga $ is the Gamma function and  $ (\la)_k  = \la ( \la + 1) \dots ( \la + k - 1 ) $ when $ k $ is a positive integer with $ (\la)_0 = 1 =  (0){\ts !}  \, . $   
\beq \label{4.10}   B = b^{n/2 }   \int_0^{\pi}  ( 1 - \psi )^{ - n/2} (\sin \ph )^{n-3} d \ph  
 =  \left( \sum_{l=0}^{\infty}  \frac{ (n/2 )_{2l} \,  \Ga (n/2 - 1 ) \,   \Ga ( l + 1/2) }{ (2l)! \, \, \Ga (  n/2 +  l - 1/2) }  \left(\frac{a}{b}\right)^{2l} \right)   \eeq  
\beq  \label{4.11}   C =  b^{n/2 +1} \int_0^{\pi}  ( 1 - \psi )^{ - (n/2 + 1) } (\sin \ph )^{n-3} d \ph  
 =  \left( \sum_{l=0}^{\infty}  \frac{ (n/2  + 1 )_{2l} \,  \Ga (n/2 - 1 ) \,   \Ga ( l + 1/2) }{ (2l)! \, \, \Ga (  n/2 +  l - 1/2) }  \left(\frac{a}{b}\right)^{2l} \right)   \eeq  
In terms of this notation, $ 0 \leq  a  \leq 1,  a < b   \leq  2,   $   and  \eqref{4.4} - \eqref{4.11},  we get at $ (1, \he, \he_1 ), $   \beq  \bea{l} \label{4.12} (n/2 - 1)^{-1} (2b)^{n/2-1} \sin \he_1 \sin \he { \ds  \frac{ \ar^2  h (1, \he, \he_1) }{ \ar \he \, \ar \he_1} } \\ \\ = 
 (n/2 - 1)^{-1}  2^{n/2 - 1} b^{n/2-1}  (\sin \he_1)  ( J_1 + J_2 +  J_4 + J_5 )    
\\ \\ =  ( n/2 - 1 )  ( 1 -  b )    A    { \ds   + \,  b^{-1} \left[ (n/2  - 1)    \, 
   (b^2  - a^2)   +  \,  b  \right]
    \, B}    \\ \\   - b^{-2}     (n/2)   { \ds ( b^2  - a^2 )    C}  \\ \\  =  b (n/2 - 1 ) (B - A)   +  (n/2 - 1) (A-C)  +   ( B - C )   \\ \\    - (n/2 - 1 ) (a^2/b) \, B    +  (n/2)  (a^2/b^2)  C     ={\ds  \frac{\Ga (n/2 - 1)  
 \Ga(1/2) }{\Ga ( n/2 - 1/2 )} }  D  \ea \eeq

if $ n = 2m, $ the $ l = 0 $ term   in the sum for $ D $  is     
\beq  \label{4.13}   T_0^2 = -   (m - 1 ) (a^2/b )  +   m (a^2/b^2) .       \eeq  
Let $ A_l,  B_l,  C_l, l = 1, 2, \dots  $  be  the  $ l $ th nonzero coefficient multiplying  $  (a/b)^{2l} $ in  $ A. 
B, C. $  If  again $ n = 2m, $ then 
 \beq \bea{l} \label{4.14}   {\ds    (B  - A)_1  = ( \frac{m+1}{m-1} - 1 ) A_1 = \frac{2}{m-1} A_1 ,  \, \,   \,    (C- A)_1  = ( \frac{(m+1)(m+2)}{m(m-1)} - 1 ) A_1 =   \frac{ 4m + 2}{(m-1)m} A_1,}  \\ \\ 
                      {\ds ( C -  B)_1  =    \frac{ 2(m + 1 )}{(m-1) m }  A_1.  }  \ea \eeq   Also 
\beq   \label{4.15} \,\frac{\Ga (m - 1 /2) }{\Ga(1/2) \Ga (m-1)}  A_1 =   
\frac{ (m-1) m   }{2  (2 m-1) } .  \eeq 
 Using \eqref{4.14}- \eqref{4.15}   we deduce that the part of the sum in \eqref{4.12}   involving $ A_1, B_1, C_1,  $ in  $D$ is \beq \bea{l} \label{4.16}  
{\ds \left[ 2 b   - 4  - (2/m)   -   2 \frac{m +1}{( m - 1 ) m }  -  (m+1) a^2/b    +      \frac{  (m +1)(m+2) }{ m - 1}   a^2/b^2 \right] \frac{ (m-1)\,  m \, (a/b)^2  }{ 2(2 m-1) } }  =     
	\\ \\   \left(  {\ds  \frac{ m( m - 1)}{  2 m - 1}  a^2/b  - \frac{4m^2}{ 4m-2} a^2/b^2 } \right)   + {\ds  \left( -  \frac{  (m-1) m (m+1)}{4m - 2 }   a^4/b^3  +       \frac{m (m+1)(m + 2 )}{4m - 2}   a^4/b^4 \right)}  \\ \\ \hs{.83in} {\ds =   T_1^1 \hs{1.25in}  + \hs{ 1.4in} T_1^2 }   \ea \eeq  
  From   \eqref{4.13}, \eqref{4.16},  we get  
 \beq \bea{l} {\ds  \label{4.17} T_0^2 +   T^1_{1} =   - \frac{ (m -1)^2}{( 2m - 1 )}  a^2/b   \,-   \frac{ 2m}{4 m-2 }\,  a^2/b^2 } < 0 \ea \eeq   
Fix $ l $ a positive integer and once again set  $ n = 2m.$  As in the case $ l = 1 $ we see  from    \eqref{4.9} - \eqref{4.11} that 
\beq \bea{l} \label{4.18}  {\ds (B - A)_l =  ( \frac{m + 2l - 1 }{m - 1} - 1 ) A_l  =  \frac{2l}{m-1} A_l }, \\ \\  (C-A)_l  = {\ds [\frac{ ( m + 2l - 1)(m+2l)}{(m-1)m} - 1]A_l  = \frac{4lm + 2l (2l-1)}{(m-1)m}}A_l  \\  \\   ( C - B )_l  = {\ds \left[ \frac{ ( m + 2l - 1)(m+2l)}{(m-1)m} -  \frac{m + 2l - 1 }{m - 1}  \right]  A_l =        \frac{ 2l ( m + 2l - 1)}{(m-1)m} A_l }  \ea \eeq  
  Also  \beq    \label{4.19}        \frac{\Ga (m - 1 /2) }{\Ga(1/2) \Ga (m-1)}   A_l =   \frac{ (m-1)_{2l}  (l-1/2) (l - 3/2) \dots (1/2)}{ (2l) ! (m+l - 3/2) ( m + l - 5/2 ) \dots  (m-1/2)} 
\, .  \eeq   
Using \eqref{4.18}, \eqref{4.12},  we see that the terms in   $ D $  involving   $ (a/b)^{2l}  $ times  $ A_l, B_l, C_l $  are ,  
\beq  \bea{l} \label{4.20}  \left[ [2l b     -    4l - (2l/m) (2l - 1 ) - \frac{2l (m+2l - 1)}{m (m-1) } \right] A_l  \,  (a/b)^{2l}   +  \\ \\  
  {\left[  -{\ds  (a^2/b) (m + 2l - 1 )   +  \frac{ (m + 2l - 1)(m+2l)}{m-1}   (a^2/b^2) } \right]   A_l }\,  (a/b)^{2l}   = T^1_{l} + T^2_{l}   \ea \eeq                                                                        
We claim that   \beq  \label{4.21} \sum_{k=0}^l  T^1_{k} + \sum_{k=0}^{l-1} T^2_{k}  \leq 0   \mbox{ for } l = 1, 2, \dots   \eeq 
where $ T^1_{0} = 0. $    Note from  \eqref{4.17} that \eqref{4.21} is true when  $ l = 1.$  Proceeding by induction assume  \eqref{4.21} is  true  for some  positive integer $ l $.   We  need to show that  
\beq  \bea{l}  \label{4.22} \left[ [2(l+1) b     -    4 (l+1) - {\ds \frac{2 (l+1)(2l+1)}{m} - \frac{2(l+1) (m+2l +1)}{m (m-1) }} \right] A_{l+1} \,  (a/b)^2 \\ \\ 
 +     \left[  -{\ds  (m + 2l - 1 ) \frac{a^2}{b}    +  \frac{ (m + 2l - 1)(m+2l)}{m-1}   (a/b)^2 } \right]   A_l   < 0        \ea    \eeq                
  Now   \beq   \label{4.23}  A_{l+1}/ A_l   =   \frac{ (m + 2l)(m + 2l - 1 )(l+1/2) }{ (2l + 2 )( 2l + 1)(m+l - 1/2) } = \frac{ (m+2l)(m+2l - 1)}{4 (l+1)(m + l - 1/2)}   \eeq  From \eqref{4.23} we get for the $ a^2/b $ term in  \eqref{4.22} 
    \beq  \label{4.24}   2(l + 1)  b \, A_{l+1}\,  (a/b)^2 -{\ds (m + 2l - 1 )  (a^2/b)   } A_l  = - \frac{ (m - 1)( m + 2l - 1 ) }{ 2 ( m + l - 1/2)}\,  (a^2/b) \, A_l <  0. \eeq  
Next we observe  from \eqref{4.23}  for the $ A_{l+1} (a/b)^2 $ term in \eqref{4.22} that     \beq  \bea{l} \label{4.25}    {\ds  \left( -    4 (l+1) - \frac{2 (l+1)(2l+1)}{m} - \frac{2(l+1) (m+2l +1)}{m (m-1) } \right) A_{l+1} (a/b)^2  }\\ \\ = 
- {\ds \frac{4 (l+1)(m+l) }{m-1} A_{l+1} (a/b)^2  =   - \frac{ (m+l)(m+2l)(m + 2l - 1) }{ (m-1)(m+l-1/2)} A_l  (a/b)^2  }        \ea \eeq      Hence  adding   the $ (a/b)^2 A_l $ term in \eqref{4.22}  to the right hand side of \eqref{4.25},  
\beq   \bea{l} \label{4.26}  {\ds    - \frac{ (m+l)(m+2l)(m + 2l - 1) }{(m-1)(m+l-1/2)} A_l  (a/b)^2   +  \frac{ (m + 2l - 1)(m+2l)}{m-1} A_l \, (a/b)^2 } \\ \\ 
 =  {\ds - \frac{ (m + 2l - 1)(m+2l)}{2(m-1)(m+l - 1/2)  } A_l (a/b)^2  < 0 . }  \ea \eeq 
 Finally adding  right hand  sides of  \eqref{4.26},  \eqref{4.24},  we find from \eqref{4.22}  and induction  that   claim \eqref{4.21} is true.    From the definition of  $ A, B, C $  we see that these functions have absolutely convergent  series involving powers of $ (a/b)^{2l} $ and thereupon  that   \eqref{4.21}  converges to  $  \frac{\Ga (n/2 - 1)  
 \Ga(1/2) }{\Ga ( n/2 - 1/2 )}  D. $   So  $ D < 0 $ and it follows from \eqref{4.12}  that  Proposition 4.1  is valid  when $ n \geq 3 $ is a positive integer.   
\end{proof} 
  \section{Proof of Theorem 1.2 }\label{sec5} 
\setcounter{equation}{0} 
 \setcounter{theorem}{0} \begin{proof} 
Recall from section 1  that if $ \xi_2 = \pi, 0  < \xi_1 < \pi, $ and $ P =  P ( \cdot, d, M ) $ 
is the corresponding extremal potential satisfying  $(a) - (d) $  of Theorem 1.1,  then $ P $ is harmonic in $ \rn{n} \sem E_1, $ so 
\beq \label{5.1}  d   =  P ( -  e_1, d, M ),  \mbox{  and  \eqref{1.2} holds whenever }  p \in  
\mathcal{F}_d^M. \eeq  
Thus to prove Theorem  1.2 we show for $ d $  as in \eqref{5.1} and $ 1 < M < \infty,$ 
that  \beq \label{5.2} \int_{\mathbb{S}^{n-1}} \Phi (P( r y, d, M )) \, d \mathcal{H}^{n-1} \leq       
 \int_{\mathbb{S}^{n-1}} \Phi (P( r y, d, \infty)  ) \, d \mathcal{H}^{n-1},  0 < r < \infty. \eeq 
 To do so we first note for any $  p \in \mathcal{F} $  that  $  2^{2-n} \leq  p  $  in $ B (0, 1 ) $ as follows from the minimum principle for potentials and the fact that any two points on $ \mathbb{S}^{n-1} $ are at most distance two apart. Also in [L] we showed the existence of $ P ( \cdot, d, \infty ) $  in\\ Theorem \ref{1.1} whenever  $ 2^{2-n} \leq d < 1. $  Thus $ P ( \cdot, d, \infty) $ exists when  $ d $ is as in \eqref{5.1}.  

The proof of  \eqref{5.2} is by contradiction.    
For ease of writing  we put 
  $ P = P ( \cdot, d, M ),$ $  P' = P ( \cdot, d, \infty ), $ and  write  in spherical coordinates,  $ P ( r, \he),  P' ( r, \he ), $ which is permissible,  since  both functions are   symmetric about the  $ x_1 $  axis.   Also for fixed   $ \xi_1 \in  (0, \pi ) , $ let $ E_1, $  be as defined  in Theorem 1.1 relative to $ P $  while 
$ \xi'_2  \in (0, \pi),    E'_2,   $ are defined relative to  $P'.$ 
As in \eqref{2.3}-\eqref{2.6},  and from Theorem 1.1  we deduce the existence of  positive Borel measures, $ \nu, \mu, $  on $ [0, \pi ] $  with total mass $ c_n^{-1}$  corresponding to  $ P, P' , $ respectively.  
   $ \nu $ has its support  in $ [0, \xi_1] $  while $ \mu $ has its support in $\{0\} \cup [\xi_2', \pi].$ 
Moreover    \beq  \bea{l} \label{5.3} (a) \hs{.2in} P ( 1, \he ) = c_n { \ds  \int_0^{\xi_1} h ( 1, \he, \he_1 ) \, d \nu (\he_1 ) } \mbox{ and }  \\ \\  (b) \hs{.2in} P' ( 1, \he ) =  c_n \,  \al \, h ( 1, \he, 0  )  + c_n  { \ds \int_{\xi'_2}^{\pi} h ( 1, \he, \he_1 ) \, d \mu ( \he_1 ) } \mbox{ with } 
  \al = \mu ( \{e_1\} ) > 0.  \ea  \eeq     
 We note  from Theorem \ref{1.1}   that $ P (r, \cdot ),  P' ( r,  \cdot ),  $  are  continuous in the extended sense  and non increasing  on $ [0, \pi] $  whenever  $ 0 < r < \infty. $    From this fact,  \eqref{1.7}, 
\eqref{1.8},  and   \\ d$\mathcal{H}^{n-1} = c_n (\sin \he)^{n-2} d\he,  $  we deduce that  to prove  \eqref{5.2} it suffices to show 
\beq \label{5.4}   \int_0^{\tau}  P ( r, \he ) \sin^{n-2} \he d\he  \leq  \int_0^{\tau}  P' ( r, \he ) \sin^{n-2} \he d\he \eeq whenever $ 0 <  r < \infty, 0  \leq \tau \leq \pi$.     Moreover from the Baernstein maximum principle (see  the discussion after \eqref{1.7}) ,  we need only  prove  \eqref{5.4}   when $ r = 1. $   To do this observe that if  \eqref{5.4}    is false for some $  \hat \he \in (0, \pi), $ when $ r = 1, $   then there exists $ \bar \he \in (0, \pi ) $   with      
\beq \label{5.5}  0 <  \max_{\tau \in [0, \pi ] }   \int_0^\tau  ( P - P')  (1, \he ) \sin^{n-2} \he d \he  =  \int_0^{\bar \he }   ( P - P' ) (1, \he )   \sin^{n-2} \he d \he.  \eeq     Since  $ P'  $ is  strictly decreasing on $ [0, \xi'_2]  $  with $ P' \equiv d  <  P $ on $ [\xi_2', \pi),$ $ P \equiv M $  on $ [0,  \xi_1 ], $   and the integrals in \eqref{5.4}  are equal when $ \tau = \pi, $    we see from the first derivative test in calculus, 
    that  $ \xi_1 < \xi'_2$  and we  may assume 
\beq \label{5.6}   \bar \he \in [\xi_1   ,  \xi_2' ],  \mbox{  with }  P ( 1, \bar \he ) = P' ( 1, \bar \he ). \eeq   To get  a  contradiction  we  note  from Proposition 4.1  and  
\eqref{5.3} $(a)$ that if  $ \he \in (\xi_1, \xi_2'), $     
  \beq  \label{5.7} -  \frac{\ar P}{\ar \he  } ( 1,  \he)   = -  c_n  \int_0^{\xi_1}  \frac{ \ar h}{\ar  \he }  ( 1, \he, \he_1 ) d \mu (\he_1)   \geq  - c_n \, \frac{ \ar h}{ \ar \he }  ( 1, \he, 0 ).  \eeq 
    On the other hand   from Proposition 4.1 we have   for  $  \he, \he_1  \in  [0, \pi ],  \he_1 > \he,  $     
\beq \label{5.8} \frac{ \ar h}{\ar \he_1} ( 1, \he, \he_1 )    >    \frac{ \ar h}{\ar \he_1} ( 1, \he, \pi   )   = (n/2 - 1) \sin \he ( 1 + \cos \he )^{-n/2} > 0. \eeq
Using \eqref{5.8} amd \eqref{5.3}$ (b) $ we see  for  $ \he \in (\xi_1, \xi_2'), $  that 
  \beq   \label{5.9} \frac{\ar P'}{\ar \he  } ( 1,  \he)    >      c_n \, \al \, \frac{ \ar h}{\ar  \he }  ( 1, \he, 0  )  =  -  c_n \al   (n/2 - 1) \sin \he ( 1 - \cos \he )^{-n/2}.   \eeq    Combining   \eqref{5.7} and \eqref{5.9}  we get   
\beq \label{5.10}   \frac{\ar P'}{\ar \he  } ( 1,  \he)   -  \frac{\ar P}{\ar \he  } ( 1,  \he)      > c_n  ( \al - 1 )  \frac{\ar h}{ \ar \he}  (1, \he, 0 )  > 0 \mbox{ on } ( \xi_1, \xi_2'),  \eeq   
since $ 0 < \al < 1. $  Thus $ P' - P $  is increasing on  $  ( \xi_1, \xi_2' ], $  so
\beq   \label{5.11} P '( 1,  \he ) - P ( 1,  \he)     <      P' ( 1, \xi_2 ) - P ( 1, \xi_2 ) =   d - P ( 1, \xi_2 )  < 0,   \eeq   since  $ P $ is strictly decreasing on $ [\xi_1, \pi]  $ 
with $ P ( 1, \pi ) = d. $  \,   Letting  $ \he \rar \bar \he $     we arrive at a  contradiction to    \eqref{5.5}.     From this contradiction we conclude Theorem 1.2.   \end{proof}     
\begin{remark} 
Conjecture 1  when  $ E_2 \not = \{ - e_1  \} $  or even \eqref{1.9} when $ E_1 \not = \{e_1\}, $   seems   difficult.
  If $ E_2 \not =  \{-e_1\}, $   the main problem is  that the proposed extremal potential, $ P ( \cdot, d, M ) $  must have  mass   at points on  $  \mathbb{S}^{n-1} $   where it assumes   both  its  maximum  and   minimum values on 
 $  \bar B (0, 1 ). $   This splitting of  the mass  seems  to  rule out   an immediate proof of  Conjecture 1     using     Baernstein's   $ * $  function.  A simpler problem  
in  view  of   \eqref{1.8} is to show  for  $ p \in \mathcal{F}_d^M :$  
    \\  
\beq \label{5.12}     \int_{  \{ y \in  \mathbb{S}^{n-1}: y_1 \geq \cos \he_0 \}}  p (r y )   \, d \mathcal{H}^{n-1} y   \, \leq \, 
\int_{ \{ y \in  \mathbb{S}^{n-1}: y_1 \geq \cos \he_0  \} }  P ( ry,  d, M )  
 \mathcal{H}^{n-1} y  \eeq whenever  $ 0 \leq \he_0 \leq \pi,  0 < r < \infty, $   
  Note   that  a positive  answer to  \eqref{5.12} would imply  \eqref{1.9}    when  $ E_1 \not = 
\{ e_1\} $ and also \eqref{3.19}.  
\end{remark}    To    indicate  our efforts in trying  to prove \eqref{5.12},  we    first observe  that it  suffices  to  prove  \eqref{5.12} when $ p $ is symmetric 
about the $ x_1 $  axis, so $ p (x)  = p ( r, \he ) $  when $ |x| = r, x_1 = r \cos \he. $     Also,  using the  Baernstein * function, as mentioned earlier,  we need only prove  \eqref{5.12}  when $ r = 1. $   Next as in  \eqref{2.4},  we see that   
\beq \label{5.13} p ( 1, \he) = c_n  \int_0^{\pi}   h ( 1, \he, \he_1 ) d \si ( \he_1 ) \eeq  
where $ h$ is as in \eqref{2.5}  and  $ \si $ is a positive Borel measure on $ [0, \pi] $  with  $ \si ([0, \pi] ) = c_n^{-1}. $ 
 Given $ \he_0 \in (0, \pi), $  it follows from the  Fubini theorem   and  \eqref{5.13}  that  
\beq \label{5.14}  \int_{ 0}^{\he_0 }  p (1, \he)  \sin^{n-2} \he d \he =   \int_0^{\pi}  k ( 1, \he_1 ) 
d \si (\he_1 )  \eeq    where 
\beq \label{5.15} k (1, \he_1)  =  c_n  \int_0^{\he_0}  h ( r, \he,  \he_1 ) \sin^{n-2} \he  d \he \eeq
We note  that  $  k ( 1, \cdot) $ is continuous  on  $ [0, \pi] $  (in fact H\"{o}lder continuous)  so from  a theorem on weak convergence of measures, we deduce that 
\beq  \label{5.16}  \sup_{p \in \mathcal{F}_d^M}  \int_{ 0}^{\he_0 }  p (1, \he)  \sin^{n-2} \he d \he
=   \int_{ 0}^{\he_0 }  \ti P  (1, \he)  \sin^{n-2} \he d \he \eeq for  some 
$  \ti P \in \mathcal{F}_d^M .  $ 
As for  $ \ti P, $  we can prove 
when  $ M =  \infty :$  \begin{lemma} \label{lem5.2}   Let  $ \ti \nu  $  be the positive Borel measure on $ [0,\pi] $   corresponding  to $ \ti P $  in \eqref{5.16}.  
 Then 
  \beq  \, \label{5.17}  \, \ti \nu \{ \he_1 :   \he_0 < \he_1  \leq \pi \mbox{ and } \ti P  (1,  \he_1 ) \not = d \} = 0 \eeq 

\end{lemma}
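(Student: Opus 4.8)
The plan is to treat \eqref{5.16} (for $M=\infty$) as an infinite dimensional linear program and to read Lemma \ref{lem5.2} off its Euler--Lagrange (complementary slackness) relation, Proposition \ref{prop4.1} supplying the one sign computation that makes the argument work. Writing a competitor $p\in\mathcal{F}$ with $p\ge d$ on $\bar B(0,1)$ through its colatitude distribution $\si$ --- so $p(1,\he)=c_n\int_0^{\pi}h(1,\he,\he_1)\,d\si(\he_1)$ as in \eqref{2.4}, \eqref{5.13}, with $\si$ of the fixed total mass forced by $p(0)=1$ --- the problem \eqref{5.16} maximizes the linear functional $\si\mapsto\int_0^{\pi}k(1,\he_1)\,d\si(\he_1)$ (cf.\ \eqref{5.14}, \eqref{5.15}) over such $\si$. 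Since $p_\si$ is harmonic in $B(0,1)$ it takes its minimum on $\bar B(0,1)$ along $\mathbb{S}^{n-1}$, and $p_\si\equiv d$ on $\mathbb{S}^{n-1}$ is impossible (it would give $p_\si(0)=d<1$), so the constraint reads $p_\si(1,\he)\ge d$ on $[0,\pi]$, with contact set in $\mathbb{S}^{n-1}$. The normalized surface measure on $\mathbb{S}^{n-1}$ gives $p_\si\equiv 1>d$ on $\bar B(0,1)$ (it is bounded and harmonic in the open ball with value $1$ at the origin), a Slater point, so strong duality holds: there are a scalar $c_0$ and a positive rotation invariant measure $\la$ on $[0,\pi]$ supported on $Z:=\{\he\in[0,\pi]:\ti P(1,\he)=d\}$ with
\[
\Ps(\he_1)\,:=\,k(1,\he_1)\,-\,c_n\int_{Z}h(1,\he,\he_1)\,d\la(\he)\ \leq\ c_0\quad\text{ on }[0,\pi],\qquad \Ps=c_0\ \ \ti\nu\text{-a.e.}
\]
If $\la\equiv 0$, complementary slackness forces $\ti\nu$ to sit where $k(1,\cdot)$ is maximal; but $k(1,\he_1)=c_n\int_0^{\he_0}h(1,\ph,\he_1)\sin^{n-2}\ph\,d\ph$ is strictly decreasing on $[\he_0,\pi]$ --- by Proposition \ref{prop4.1} its $\he_1$-derivative decreases in $\ph$, so for $\ph\le\he_0<\he_1$ it does not exceed $\frac{\ar h}{\ar\he_1}(1,0,\he_1)$, which one computes to be negative --- whence $\ti\nu$ charges no point of $(\he_0,\pi]$ and Lemma \ref{lem5.2} holds. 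So assume henceforth $\la\not\equiv 0$, and $Z\neq\es$.

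The remaining ingredient, and the one needing genuine work, is that $\ti P(1,\cdot)$ is non increasing on $[0,\pi]$. This should follow by a reflection argument of the type used for Lemma \ref{lem2.3}, or by a symmetric decreasing rearrangement in the spirit of the Baernstein $*$ function, showing that a maximizer in \eqref{5.16} may be taken with this property. Granting it, $Z=[\xi,\pi]$ for some $\xi\in(0,\pi)$; more to the point, whenever $\he_1^{*}\in(0,\pi)$ with $\ti P(1,\he_1^{*})>d$ we then have $\ti P(1,\he)>d$ for all $\he\le\he_1^{*}$, so $Z\subset(\he_1^{*},\pi]$.

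Now suppose, seeking a contradiction, that $\ti\nu(\{\he_1\in(\he_0,\pi]:\ti P(1,\he_1)\neq d\})>0$. On that set $\ti P(1,\cdot)>d$, and as $\Ps=c_0$ holds $\ti\nu$-a.e.\ we may pick $\he_1^{*}$ in it with $\Ps(\he_1^{*})=c_0$. By the last paragraph $\he_1^{*}\in(\he_0,\xi)\subset(0,\pi)$ and $Z\subset(\he_1^{*},\pi]$; hence $\he_1^{*}$ lies a positive distance from each of $[0,\he_0]$ and $Z$, so the diagonal singularities of $h(1,\cdot,\he_1)$ avoid a neighborhood of $\he_1^{*}$, $\Ps$ is $C^1$ there, and differentiating under the integral signs,
\[
\Ps'(\he_1^{*})\,=\,c_n\int_0^{\he_0}\frac{\ar h}{\ar\he_1}(1,\ph,\he_1^{*})\sin^{n-2}\ph\,d\ph\ -\ c_n\int_{Z}\frac{\ar h}{\ar\he_1}(1,\he,\he_1^{*})\,d\la(\he).
\]
By Proposition \ref{prop4.1}, $\he\mapsto\frac{\ar h}{\ar\he_1}(1,\he,\he_1^{*})$ is strictly decreasing on each of $[0,\he_1^{*})$ and $(\he_1^{*},\pi]$. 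Thus for $\ph\in[0,\he_0]$ (so $\ph<\he_1^{*}$) it is $\le\frac{\ar h}{\ar\he_1}(1,0,\he_1^{*})<0$, making the first integral negative; and for $\he\in Z\subset(\he_1^{*},\pi]$ it is $\ge\frac{\ar h}{\ar\he_1}(1,\pi,\he_1^{*})>0$, so, since $\la\ge 0$ and $\la\not\equiv 0$, the second integral is positive and enters with a minus sign. Hence $\Ps'(\he_1^{*})<0$, so $\Ps(\he_1^{*}-\e)>\Ps(\he_1^{*})=c_0$ for small $\e>0$, contradicting $\Ps\le c_0$ on $[0,\pi]$. This establishes Lemma \ref{lem5.2}.

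The main obstacle is the monotonicity of $\ti P(1,\cdot)$ --- equivalently, that the contact set $Z$ sits entirely at colatitudes south of every mass of $\ti\nu$ where $\ti P>d$ --- together with a clean treatment of the strong duality and of the support of the multiplier $\la$ for the continuum of constraints $p_\si\ge d$. The sign computation above is then routine: it rests only on Proposition \ref{prop4.1} and on the boundary forms $h(1,0,\he_1)=c_n^{-1}\,2^{1-n/2}(1-\cos\he_1)^{1-n/2}$ and $h(1,\pi,\he_1)=c_n^{-1}\,2^{1-n/2}(1+\cos\he_1)^{1-n/2}$, whose $\he_1$-derivatives are respectively negative and positive on $(0,\pi)$.
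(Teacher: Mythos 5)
Your proposal takes a genuinely different route from the paper's. The paper argues by direct variational perturbation: assuming \eqref{5.17} fails, it selects $\he_0<\tau_1<\tau_2<\tau_3$ with $\ti P(1,\cdot)>d$ on $[\tau_1,\tau_3]$ and $\ti\nu([\tau_1,\tau_2])=\ti\nu([\tau_2,\tau_3])$, constructs the explicit ``mass-moving'' competitor $\ti P_\ep$ of \eqref{5.19} via the reparametrization $\ti g(\he_1,\ep)$ of \eqref{5.18}, and shows $\frac{\ar \ti P_\ep}{\ar\ep}>0$ on $[0,\he_0]$ (the heart being the ratio monotonicity \eqref{5.23}, which the paper notes is ``in the same spirit as'' Proposition \ref{prop4.1} but is a different statement). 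You instead recast \eqref{5.16} as a semi-infinite linear program, invoke strong duality to get a Lagrange-multiplier measure $\la$ supported on the contact set $Z$, and derive a contradiction from the first-order condition $\Ps'(\he_1^*)<0$ using Proposition \ref{prop4.1} directly. The sign computation you give (both integrals signed via the strict decrease of $\he\mapsto\frac{\ar h}{\ar\he_1}(1,\he,\he_1^*)$ and the explicit boundary values of $\frac{\ar h}{\ar\he_1}(1,0,\cdot)<0<\frac{\ar h}{\ar\he_1}(1,\pi,\cdot)$) is clean and correct.

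However, there is a genuine gap, which you yourself flag and then do not close: the claim that $\ti P(1,\cdot)$ is non increasing on $[0,\pi]$ --- equivalently that $Z\subset(\he_1^*,\pi]$ --- is not available at this stage and is essentially as hard as the lemma. The maximizer $\ti P$ in \eqref{5.16} is an abstract extremizer, not known in advance to be the keyhole potential $P(\cdot,d,\infty)$; a priori $\ti\nu$ could put mass at an interior colatitude $\he_1\in(\he_0,\pi)$, in which case $\ti P(1,\cdot)$ has a (logarithmic, for any $n\ge3$) spike there and is certainly not monotone. In fact the paper's Remark following the lemma only deduces the structural statement $\ti P(1,\cdot)\equiv d$ on $[\hat\he_0,\pi]$ \emph{from} \eqref{5.17}, so taking monotonicity as an input puts the cart before the horse. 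Without $Z\subset(\he_1^*,\pi]$, the second integral in your expression for $\Ps'(\he_1^*)$ can contain contributions from $\he<\he_1^*$ where $\frac{\ar h}{\ar\he_1}(1,\he,\he_1^*)$ is negative, and the sign argument breaks down. A secondary concern, lighter but not nothing, is the strong-duality step: the constraint map $\si\mapsto p_\si(1,\cdot)$ does not land in $C([0,\pi])$ (potentials can be $+\infty$ on the sphere), so the existence of the multiplier $\la$ supported on $Z$ needs more care than a citation of a Slater point. The paper's mass-moving construction sidesteps both issues entirely: it never needs monotonicity of $\ti P(1,\cdot)$ or a dual measure, at the cost of the elaborate competitor \eqref{5.18}--\eqref{5.19} and the calculation behind \eqref{5.20}.
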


\begin{proof} 
  The  proof of  \eqref{5.17} is by contradiction  and  essentially given in  Lemma 4   of  \cite{L}.     To briefly outline the argument,  if \eqref{5.17} is false, then  using lower semi-continuity of $ \ti P, $   one  can  assume there exists  $ \tau_i, 1 \leq i \leq 3, $ with $ \he_0 < \tau_1 <   \tau_2 < \tau_3 \leq \pi,  $ $ \ti P  ( 1, \cdot )  >  d,  $   on  $[\tau_1, \tau_3],$ and $ \ti\nu ([\tau_1, \tau_2]) = \ti   \nu ( [\tau_2, \tau_3] ). $  For  $ \ep_0 $ small, $ \ep \in [- \ep_0, \ep_0 ]\sem \{0\}, $  and  $  \tau_1 < \he_1 < \tau_3 $  define  $ \ti g (\he_1, \ep ) $ by 
    \beq   \bea{l}  \label{5.18}  (a) \hs{.2in}  0 < \ti g ( \he_1, \ep) < \pi \\ (b) \hs{.2in} (1+ \ep) [ ( 1 + \cos \ti g (\he_1, \ep) )^{1-2/n} + B ] =  
  ( 1 + \cos \he_1 )^{1-2/n}  + B  \ea \eeq   where  $ B > 0 $ is to be chosen.  One can verify  from 
\eqref{5.18}   that $  \ti g ( \he_1, \ep )  $  is an increasing function of  $ \he_1 $  on $  [\tau_1, \tau_3].  $  For   $ \ep \in (0, \ep_0], $  let  
  \beq \bea{l} \label{5.19}    \ti P_\ep ( r, \he )  = c_n  ( 1 + \ep )  { \ds \int_{\tau_1}^{\tau_2}  h ( r, \he ,  \ti g (\he_1, \ep ) ) d\nu ( \he_1) }
  \\  \hs{.5in} +  c_n ( 1 - \ep )  { \ds \int_{\tau_2}^{\tau_3}  h ( r, \he ,  \ti g (\he_1, - \ep ) ) d\nu ( \he_1) }  \\   
	    \hs{.5in} +  c_n    { \ds \int_{[0,\pi] \sem [\tau_1,\tau_3]}  h ( r, \he ,  \he_1  )  d\nu ( \he_1) }.  
  \ea \eeq  Next   since    $ \ti g ( \he_1, \pm  \ep )  $  is an increasing function of  $ \he_1 $  on $  [\tau_1, \tau_3]  $    it follows that  
  $ \ti P_{\ep} $ is a potential symmetric about the $ x_1 $ axis.       
     A lengthy  calculation     then   
    gives  for $ B $ sufficiently large that
  $\ti P_{\ep} \in  \mathcal{F}_d^{\infty} $  and  
  \beq  \label{5.20}   \frac{\ar \ti P_{\ep} (r, \he )}{ \ar \ep} > 0   \mbox{ when  $ 0 < \ep < \ep_0, 0 < r < \infty, $ and 
  $ \he \in [0, \he_0].$ } \eeq  
  From \eqref{5.20}   we obtain    \beq  \label{5.21}  \int_0^{\he_0}  \ti P ( r, \he) ( \sin \he )^{n-2} d \he   <     \int_0^{\he_0}  \ti P_{\ep}  ( r, \he) ( \sin \he )^{n-2} d \he \eeq   which  contradicts    \eqref{5.16}   and  so Lemma  1 is true.  \end{proof} 

\begin{remark}    Using   \eqref{5.17}  of   Lemma 5.2,   one can  get  as in 
[L]  that  for some\\ $ \hat \he_0  \in [\he_0, \pi], $   
   \beq  \label{5.22}  \ti P ( 1, \he )  =  d  \mbox{ on }   [\hat \he_0, \pi].  \eeq       To use  the  above  `mass moving '  argument   further  in  proving 
\eqref{5.12}    appears     difficult 
  since     from the mean value  property for harmonic functions,  \[ \int_0^{\pi}    \frac{\ar \ti P_{\ep} (1, \he )}{ \ar \ep}  (\sin \he )^{n-2} d \he   =  c_n^{-1}   \frac{\ar \ti P_{\ep} (0, 0 )}{ \ar \ep} = 0 . \] 
     Thus   moving mass  inside  $ (0, \he_0), $ as above,   would  create points  in this interval  where $ \ti P_{\ep} < \ti P  $   and  so  perhaps  not imply  \eqref{5.20}.    
 
The lengthy   calculation  mentioned above was  to show    that 
  \beq  \bea{l} \label{5.23} 
\frac{\ar}{\ar \he_1}   \left[ \frac{ \frac{\ar h (1, \he, \he_1 ) }{\ar \he_1 } }{ \frac{\ar h (1, \pi,  \he_1 ) }{\ar \he_1 }}  \right]   > 0   \ea \eeq     
  whenever  $   \he,  \he_1, \in  (0, \pi)$  and $ \he \not = \he_1. $   Originally in
\cite{L},  after many months of  trying,  I  had just proved  \eqref{5.23}  for $ n = 3$      so  \eqref{1.9} was just valid  in $ \rn{3}.$  Still  I  submitted my paper to the Proc. of LMS and after a  few months  received a  handwritten report from the referee  (Walter Hayman) to the  effect   that  I  should try using the substitution,      \[   \frac{\sin \he  \sin  \he_1 }{ 1 -  \cos \he  \cos \he_1} =    \frac{2 t }{ 1 + t^2 }  \mbox{ with }   t   =   \frac{ \tan (\he/2)}{ \tan (\he_1/2)} \]  to  simplify my  calculations.   
 Using this observation    I  was eventually able to  prove \eqref{5.23} and after that use the above contradiction argument to  get 
 \eqref{1.9} in $ \rn{n},  n \geq 3.  $    
Note  that  Proposition 4.1  is in the same spirit as  \eqref{5.23}. 
\end{remark}

\end{document}